\newtheorem{theorem}{Theorem}[section]
\newtheorem{proposition}[theorem]{Proposition}
\newtheorem{lemma}[theorem]{Lemma}
\newtheorem{corollary}[theorem]{Corollary}
\newtheorem{conjecture}[theorem]{Conjecture}
\theoremstyle{definition}
\newtheorem{definition}[theorem]{Definition}
\newtheorem{remark}[theorem]{Remark}
\newtheorem*{remark*}{Remark}
\newtheorem*{proposition*}{Proposition}
\newtheorem{example}[theorem]{Example}
\newcommand{\GL}{\text{GL}}
\newcommand{\Spec}{\text{Spec}}
\title{Zeta Integrals on Arithmetic Surfaces}
\author{Thomas Oliver}
\thanks{Heilbronn Institute for Mathematical Research, University of Bristol, Bristol, UK.}
\date{\today}
\begin{document}
\maketitle
\subsection*{Abstract}
Given a (smooth, projective, geometrically connected) curve over a number field, one expects its Hasse--Weil $L$-function, a priori defined only on a right half-plane, to admit meromorphic continuation to $\mathbb{C}$ and satisfy a simple functional equation. Aside from exceptional circumstances, these analytic properties remain largely conjectural. One may formulate these conjectures in terms of zeta functions of two-dimensional arithmetic schemes, on which one has non-locally compact ``analytic'' adelic structures admitting a form of ``lifted'' harmonic analysis first defined by Fesenko for elliptic curves. In this paper we generalize his global results to certain curves of arbitrary genus by invoking a renormalizing factor which may be interpreted as the zeta function of a relative projective line. We are lead to a new interpretation of the ``gamma factor'' (defined in terms of the Hodge structures at archimedean places) and an (two-dimensional) adelic interpretation of the ``mean-periodicity correspondence'', which is comparable to the conjectural automorphicity of Hasse--Weil $L$-functions.
\tableofcontents
\section{Introduction}
Let $\mathcal{S}$ be a scheme of finite type over $\mathbb{Z}$. The zeta function of $\mathcal{S}$ is defined on $\Re(s)>\text{dim}(\mathcal{S})$ by the Euler product
\[\zeta(\mathcal{S},s)=\prod_{x\in|\mathcal{S}|}\frac{1}{1-|k(x)|^{-s}},\]
where $|\mathcal{S}|$ denotes the atomisation of $\mathcal{S}$, that is, its set of closed points, and $k(x)$ is the residue field at a closed point $x$. The question of meromorphic continuation of such zeta functions remains open, along with the conjectural functional equation with respect to $s\mapsto\dim({\mathcal{S}})-s$.
\newline A basic case is when $\mathcal{S}$ is a proper, regular model of a smooth projective curve $C$ over a number field $k$. Associated to each \'{e}tale cohomology group of $C$, one has a Hasse--Weil $L$-function $L(H^i(C),s)$ defined as the Euler product of reciprocal characteristic polynomials of the action of Frobenius on the inertia invariants. The cases $i=0,2$ reduce to Dedekind zeta functions of the ground field $k$, and so the unknown quantity is $L(C,s):=L(H^1(C),s)$. One has the following equation
\[\zeta(\mathcal{S},s)=n(\mathcal{S},s)\frac{\zeta_k(s)\zeta_k(s-1)}{L(C,s)}.\]
This expression may be viewed as a definition of $n(\mathcal{S},s)$, which is easily seen as a product of explicit functions rational in a variable of the form $p^{-s}$, where $p$ ranges over the residual characteristics of bad reduction. From this one infers that the meromorphic continuation of $\zeta(\mathcal{S},s)$ is equivalent to that of $L(C,s)$. An exercise in \'{e}tale cohomology demonstrates that $n(\mathcal{S},s)$ admits the correct functional equation so that those of $L(C,s)$ and $\zeta(\mathcal{S},s)$ are equivalent \cite{DRCACOC}.
\newline By taking into account the additional contribution of finitely many horizontal curves on $\mathcal{S}$, in this paper we study a modified zeta function
\[Z(\mathcal{S},\{k_i\},s)=\zeta(\mathcal{S},s)\prod_{i=1}^n\zeta(k_i,s/2),\]
where the number fields $k_i/k$ are determined by the horizontal curves. Up to sign, the functional equation of $L(C,s)$ is equivalent to
\[\mathcal{Z}(\mathcal{S},\{k_i\},s)^2=\mathcal{Z}(\mathcal{S},\{k_i\},2-s)^2,\]
where $\mathcal{Z}$ is not quite the product of the completions:
\[\mathcal{Z}(\mathcal{S},\{k_i\},s)=Z(\mathcal{S},\{k_i\},s)A(\mathcal{S})^{(1-s)/2}\Gamma(\mathcal{S},s)\prod_{i=1}^n\Gamma(k_i,s/2).\]
In the above expression $A(\mathcal{S})$ denotes the conductor of $\mathcal{S}$ and $\Gamma(\mathcal{S},s)$ (resp. $\Gamma(k_i,s)$) denotes the gamma factor of $\mathcal{S}$ (resp. $k_i$). These quantities will be defined in the main body of this text. 
\newline For number fields, or curves over finite fields, it is well understood that the analytic properties of zeta functions can be obtained through harmonic analysis on a commutative adelic group - this is reviewed in section~\ref{tate.section}. These techniques have long since been extended to various non-commutative algebraic groups. Our goal is to develop this idea on certain two-dimensional adelic groups. We will review the theory of two dimensional local fields in section $4.3$, and two-dimensional analytic adeles in $4.4$. The most fundamental issue is that these groups are not locally compact, and so what we mean by ``harmonic analysis'' has to be somewhat modified. The development of harmonic analysis on more general topological groups is of the utmost importance, as mentioned as far back as Weil~\cite[Foreword]{BNT}.
\newline The approach taken here is to allow our measure to take values not in $\mathbb{R}$, rather in the field of Laurent series $\mathbb{R}((X))$, which is itself a two-dimensional local field. We follow the approach of Fesenko \cite{F1, F3, F4}, though we note similarities to techniques of motivic integration which we will not seek to expose here.  
\newline Following a sketch given in \cite[Section~57]{F3}, section~\ref{zetaintegral.section} introduces zeta integrals extending those of Fesenko in the case where $C=E$ is an elliptic curve, the primary difference being a renormalising factor whose arithmetic interpretation is a power of $\zeta(\mathbb{P}^1(\mathcal{O}_k),s)$. Fesenko's original definition of zeta integrals diverges for higher genus curves, due to a certain incompatibility of the additive and multiplicative measures, which is rectified by the renormalizing factor. Moreover, there is a simple connection between this factor and the archimedean components (a quotient of gamma functions depending on Hodge structure) of the completed zeta function, as will be explained.
\newline When $C$ has simple reduction properties (which can always be obtained after base change), we will show that $\mathcal{Z}(\mathcal{S},\{k_i\},s)^2$ is an integral over certain two dimensional ``analytic adeles'', up to the square of a rational function $Q(s)$ of the following form
\[Q(s)=\frac{D}{(1-s)^{m}},\]
for constants $D\in\mathbb{C}$ and $m\in\mathbb{N}$, each of which depend on the base field $k$ and will be explicitly given. Therefore $Q(s)^2$ is invariant with respect to $s\mapsto2-s$. The required reduction properties for the integral expressions are explained in~\ref{good.section}, and are broadly comparable to semistablity. When $C$ possesses these reduction properties, the conductor $A(\mathcal{S})$ arises from counting singularities on bad fibres.
\newline The Hasse--Weil $L$-function is expected to be automorphic. This expectation is not held for the zeta function $\zeta(\mathcal{S},s)$, which, as a proper quotient of (conjecturally) automorphic $L$-functions, is not in the Selberg class. One possible replacement is the notion of mean-periodicity, as studied extensively in \cite{SRF}. We will state the relevant conjecture in $7.1$ and conclude this paper with an adelic interpretation of the mean-periodicity condition. This is the first step towards its verification via adelic duality.
\begin{remark}
The fact that it is the square of the modified completed zeta function appearing in the integral expressions is due to the fact we integrate over two-copies of the multiplicative group of the analytic adeles. Of course, one might expect that integrating over a single copy would give rise to the completed zeta function itself - this is not true at finitely many factors. There are two further reasons for considering the square of the zeta function. Firstly, in this way we avoid issues with the sign of the functional equation. Secondly, there is a compatibility with two-dimensional class field theory, which is the basis of a $\GL_1(\mathbb{A}(\mathcal{S}))$-theory that will not be included in this work.  We also observe that the integration theory used in the work of Fesenko and current paper plays a role in the theory of algebraic groups over fields of the form $\mathbb{Q}_p((t))$, for example, we have the application in \cite{SHAOSL2O2DLF}.
\end{remark}
\section{Tate's Thesis}\label{tate.section}
We will begin by summarizing the content of Tate's thesis for Dedekind zeta functions \cite{T1}. Let $k$ be a number field with ring of integers $\mathcal{O}_k$. The Dedekind zeta function of $\mathcal{O}_k$ is then the zeta function of the arithmetic scheme $\mathcal{S}=\text{Spec}(\mathcal{O}_k)$:
\[\zeta(\mathcal{S},s)=\zeta(k,s).\]
Associated to $k$ one has the locally compact groups of adeles $\mathbb{A}_k$ and ideles $\mathbb{A}_k^{\times}$. $k^{\times}$ is embedded diagonally into $\mathbb{A}_k^{\times}$ and the module map $|~|:\mathbb{A}_k^{\times}\rightarrow\mathbb{R}^{\times}_{+}$ is such that for $\alpha\in k^{\times}$, $|\alpha|=1$. 
\newline Let $f\in S(\mathbb{A}_k)$, the adelic Schwartz space, be defined as follows
\[f(\alpha)=\otimes_vf_v(\alpha_v)\]
\[f_v(\alpha_v)=\begin{cases}\text{char}(\mathcal{O}_v)(\alpha_v),&v\nmid\infty \\
\exp(-\pi \alpha_v^2), & v\text{ real,}\\
\exp(-2\pi|\alpha_v|^2), & v\text{ complex,}
\end{cases}\]
where in the case of a non-archimedean place $v\nmid\infty$, $\mathcal{O}_v$ is the ring of integers of the completion $k_v$. For all $s>1$, the following integral absolutely converges
\[\zeta(f,s):=\int_{\mathbb{A}_k^{\times}}f(\alpha)|\alpha|^sd\mu(\alpha)=\xi(k,s),\]
where $\mu$ is a Haar measure on $\mathbb{A}_k^{\times}$. In fact, up to scalar multiplication there is a unique such measure. Such integrals were known and studied by Artin, Weil, Iwasawa, Tate and many other mathematicians, and we will call them ``one-dimensional zeta integrals.''.
\newline In order to proceed, one applies basic techniques of integration (the Fubini property) and harmonic analysis on the locally compact multiplicative group of ideles. Adelic duality, whose incarnation is the theta formula and Riemann-Roch theorem, then implies the analytic continuation and functional equation of the zeta-function. More precisely, one shows that there exists an entire function $\eta_f(s)$ such that
\[\xi(k,s)=\eta_f(s)+\eta_{\widehat{f}}(1-s)+\omega_f(s),\]
where $\widehat{f}\in S(\mathbb{A}_k)$ is the Fourier transform of $f$, and $\omega_f:\mathbb{C}\rightarrow\mathbb{C}$ is the Laplace transform of a rational function:
\[\omega_f(s)=\int_0^1h_f(x)x^s\frac{dx}{x},\]
\[h_f(x)=-\mu(\mathbb{A}_k^1/k^{\times})(f(0)-x^{-1}\widehat{f}(0)).\]
The explicit form of $h_f(x)$ clearly implies the meromorphic continuation and functional equation of $\xi(k,s)$. The function $h_{f}(x)$ is closely related an integral over the weak topological boundary of a global subspace of the adeles. More precisely, if $\mathbb{A}^1_k$ denotes the set of ideles of norm $1$, then
\[h_f(x):=-\int_{\gamma\in\mathbb{A}^1_k/k^{\times}}\int_{\beta\in\partial k^{\times}}(f(x\gamma\beta)-x^{-1}\widehat{f}(x^{-1}\gamma\beta))d\mu(\beta)d\mu(\gamma).\]
The boundary $\partial k^{\times}$ is that of the multiplicative group $k^{\times}$ with respect to the weak (or ``initial'') topology on $\mathbb{A}_k$, which is simply $k\backslash k^{\times}=\{0\}$. A definition of this topology may be found in \cite[I,~2.3]{GT}.
\newline More generally, one can consider one-dimensional zeta integrals where $|~|^s$ is replaced by an arbitrary quasi-character $\chi$ of the multiplicative group of ideles. In this setting one deduces the basic analytic properties of Hecke $L$-functions. 
\newline Fesenko attempted to extend these ideas to dimension $2$ as follows \cite{F3, F4}. Let $\mathcal{E}$ be a proper, regular model of an elliptic curve $E$ over a number field $k$, then \cite[Section~3]{F3} shows that there is an entire function $\eta_{\mathcal{E}}$ such that
\[A(\mathcal{E})^{1-s}\zeta(\mathcal{E},s)^2=\eta_{\mathcal{E}}(s)+\eta_{\mathcal{E}}(2-s)+\omega_{\mathcal{E}}(s),\]
where $\omega_{\mathcal{E}}(s)$ is defined for $\Re(s)>2$. Generalizing the embedding $k\hookrightarrow\mathbb{A}_k$, there is a semi-global ring of adeles\footnote{We will see the definition of this ring in section 7.} $\mathbb{B}(\mathcal{E})\hookrightarrow\mathbb{A}(\mathcal{E})$ such that, with respect to an inductive limit of weak topologies for a given family of characters, $\omega_{\mathcal{E}}(s)$ is closely related to an integral over the boundary of $\mathbb{B}(\mathcal{E})$. We will develop these ideas for higher genus curves.
\section{Conventions}\label{good.section}
Let $C$ be a smooth, projective and geometrically connected curve over a number field $k$. We now specify a model $\mathcal{S}$ of $C$ simplistic enough for application of two dimensional adelic analysis in its current form. A further development of the theory of lifted harmonic analysis should allow for application to a more general class of arithmetic surfaces. If one is willing to base-change, no restrictions are required\footnote{One may be able to proceed by considering zeta integrals twisted by Galois characters and attempting a descent to the base field. We will not do this here, as much remains to be developed.}. 
\newline Let $\mathcal{B}$ be a Dedekind scheme of dimension $1$, and let $\pi:\mathcal{S}\rightarrow\mathcal{B}$ be a regular, integral, projective, flat two-dimensional $\mathcal{B}$-scheme. We will call such an $\mathcal{S}$ an arithmetic surface. Closed, irreducible curves on $\mathcal{S}$ are either horizontal or vertical. More precisely, such curves are either an irreducible component of a special fibre or the closure of a closed point of the generic fibre, the latter being finite and surjective onto the base $\mathcal{B}$.
\newline Since $\mathcal{S}$ is regular, the special fibre $\mathcal{S}_b$ over a closed point $b\in\mathcal{B}$ is the Cartier divisor $\pi^{\ast}b$. If a given special fibre $\mathcal{S}_b$ contains $r$ irreducible components $\mathcal{S}_{b,i}$, with multiplicity $d_i$, then, as Weil divisors
\[\mathcal{S}_b=\sum_{1\leq i\leq r}d_i\mathcal{S}_{b,i}.\]
An effective divisor $D$ on a regular Noetherian scheme $X$ is said to have normal crossings if, at each point $x\in X$, there exist a system of parameters $f_1,\dots,f_n$ of $X$ at $x$ such that, for some positive integer $m\leq n$, there are integers $r_1,\dots,r_m$ such that $\mathcal{O}_X(-D)_x$ is generated by $f_1^{r_1}\dots f_m^{r_m}$. If $D=\mathcal{S}_b$ is the fibre over $b\in\mathcal{B}$, below we will ask for this property to be true over the residue field $k(b)$, in short, we will be asking for split singularities.
\newline The zeta function depends only on the atomization of $\mathcal{S}$, in particular the zeta function agrees with that of the reduced part $\mathcal{S}_{\text{red}}$. With that in mind, for the purposes of adelic analysis we will only work with the reduced part of each fibre, tacitly using the same notation:
\[\mathcal{S}_b:=\sum_{1\leq i\leq r}\mathcal{S}_{b,i}.\]
On finitely many reduced fibres $\mathcal{S}_b$, there may well be non-smooth points. In this section we will only work with ordinary double points. Moreover, we need $S_{b}$ to be a normal crossing divisor over $k(b)$, so we will assume the ordinary double points are split. To summarize: 
\newline\textit{We will assume that the reduced part of each fibre on $\mathcal{S}$ has only split ordinary double points.}
\newline From now on, $\mathcal{B}$ will be $\text{Spec}(\mathcal{O}_k)$, where $k$ is a number field. Let $C$ be a smooth, projective geometrically irreducible curve of genus $g$ over $k$, such that $C$ has good reduction in all residual characteristics less than $2g+1$. This ensures that the Swan character is trivial and the conductor of $\mathcal{S}$ can be computed by counting singularities as in \cite{CDATNFOAS}.
\begin{remark}\label{semistable.remark}
Some authors describe an arithmetic surface $\mathcal{S}\rightarrow\text{Spec}(\mathcal{O}_k)$ as semistable if $\mathcal{S}$ is a regular $\text{Spec}(\mathcal{O}_k)$-curve with smooth generic fibre and all closed fibres are reduced normal crossing divisors.
\end{remark}
\begin{remark}
One could work with a more general class of curves by incorporating extensions of the base field. More precisely, let $C$ be a smooth, projective geometrically connected curve of genus $\geq 2$ over the function field $K$ of a one-dimensional Dedekind scheme $\mathcal{B}$. By the Deligne--Mumford theorem (\cite{TIOTSOCOGG}, \cite[Theorem~10.4.3]{L}), there exists a Dedekind scheme $\mathcal{B}'$ with function field $K'$ such that the extension $C_{K'}$ has a unique stable model over $S'$. One can take the extension $K'/K$ to be separable. This base change required by the Deligne--Mumford theorem is not intractable. Let $G:=\text{Gal}(K'/K)$, which has a natural action on $\mathcal{S}$, lifting that on $\text{Spec}(\mathcal{O}_{K'})$. The stable reduction, along with its natural $G$-action determines the local factors of the $L$-function (for example, see~\cite[Theorem~1.1]{CLFASROSC}).
\end{remark}
We conclude this section with some notation to be used throughout this paper.
\subsubsection*{Notation} The function field of $\mathcal{S}$ is denoted by $K$. Closed points of $\mathcal{S}$ are denoted $x$, and $y$ will denote an irreducible fibre or horizontal curve. When $y$ is an irreducible component of a fibre, its genus is denoted $g_y$ and function field $k(y)$. The maximal finite subfield of $k(y)$ has cardinality denoted $q(y)$. The set of components of a fibre $\mathcal{S}_{\mathfrak{p}}$ is denoted by $\text{comp}(\mathcal{S}_{\mathfrak{p}})$. If $x$ is a singular point on a fibre $\mathcal{S}_{\mathfrak{p}}$, then
\[\mathcal{S}_{\mathfrak{p}}(x)=\cup_{y\in\text{comp}(\mathcal{S}_{\mathfrak{p}})}y(x),\]
where $y(x)$ denotes the set of local branches of $y$ at $x$.
\section{Two-Dimensional Local Fields}\label{higherhaar.subsection}
Let $\mathcal{S}$ be a two dimensional, irreducible, Noetherian scheme and let $x\in y\subset\mathcal{S}$ be a complete flag of irreducible closed subschemes. If $\mathfrak{m}$ is a local equation for $x$ and $\mathfrak{p}$ is a local equation for $y$, then let $\mathcal{O}=\widehat{\mathcal{O}}_{\mathcal{S},x}$ and
\[K_{x,y}=\text{Frac}(\widehat{(\mathcal{O})_{\mathfrak{p}\mathcal{O}}}),\]
see, for example, \cite{RAA}, \cite{P3}, \cite{IHLF}, \cite[Part~1]{GAATRRFOCOS} and \cite[Sections~6,~7]{AITHDLFAA}.
\newline If $x$ is a smooth point of $y$, then $K_{x,y}$ is an example of a two-dimensional local field; it is a complete discrete valuation field whose residue field is a one-dimensional local field. If $x$ is a singular point on $y$, the same construction yields a direct product of two-dimensional local fields. Recall that $y(x)$ is the set of local branches of $y$ at $x$, then
\[K_{x,y}=\prod_{z\in y(x)}K_{x,z},\]
where $K_{x,z}$ is the two-dimensional local field associated to $x$ and the minimal prime $z$.
\newline The residue field of $K_{x,z}$ will always be denoted $E_{x,z}$. A lift of a local parameter from $E_{x,z}$ to $K_{x,z}$ will be denoted $t_{1,x,z}$, and the cardinality of the residue field of $E_{x,z}$ (which is the second residue field of $K_{x,z}$) is denoted $q(x,z)$.
\newline Let $F$ be a two-dimensional local field. As a complete discrete valuation field, it has the a discrete valuation
\[v_2:F\twoheadrightarrow\mathbb{Z},\]
for which we fix a local parameter $t_2$. We denote the ring of integers with respect to this valuation $\mathcal{O}_F$.
\newline On the residue field $\overline{F}$ we have the discrete valuation
\[v_1:\overline{F}\twoheadrightarrow\mathbb{Z}.\]
Together, $v_1$ and $v_2$ induce a ``rank 2'' valuation on $F$, which depends on $t_2$:
\[\underline{v}:F\rightarrow\mathbb{Z}^2\]
\[\alpha\mapsto(v_1(\alpha t_2^{-v_2(\alpha)}),v_2(\alpha)),\]
where $\mathbb{Z}^2$ is given the lexicographic ordering. Let $O_F$ denote the ring of integers with respect to $\underline{v}$, we have:
\[O_F=\{x\in\mathcal{O}_F:\overline{x}\in\mathcal{O}_F\}.\]
Unlike the classical situation, there are infinitely many different rank $2$ discrete valuations on $F$, however, the ring of integers and maximal ideal do not depend on this choice.
\newline When $F=K_{x,y}$ we use the notations:
\[\mathcal{O}_F=\mathcal{O}_{x,y},\]
\[O_F=O_{x,y},\]
and when $y=\mathcal{S}_{\mathfrak{p}}$ is the fibre of $\mathcal{S}$ over $\mathfrak{p}\in\Spec(\mathcal{O}_k)$ we will write
\[\mathcal{O}_{x,\mathfrak{p}}:=\mathcal{O}_{x,\mathcal{S}_{\mathfrak{p}}},\]
\[O_{x,\mathfrak{p}}:=O_{x,\mathcal{S}_{\mathfrak{p}}}\]
It is well known that complete discrete valuation fields have a non-trivial $\mathbb{R}$-valued Haar measure only when their residue field is finite. In particular, there is no $\mathbb{R}$-valued Haar measure on higher dimensional local fields. A lifted $\mathbb{R}((X))$-valued Haar measure and integration theory appeared in \cite{F2,F3}. In these papers Fesenko develops two approaches to the theory of higher Haar measure on higher local fields, taking values in formal power series over $\mathbb{R}$. A third, lifting approach, suggested in \cite{F3} was further developed by Morrow in \cite{Mor2}. All these approaches give essentially the same translation invariant measure on a class of measurable subsets of $F$. There is also a model-theoretic approach of Hrushovski-Kazdhan \cite{IIVF}.
\begin{example}\label{localtheory.example}
Let $F$ be a two-dimensional local field, with a fixed local parameter $t_2$ and residue field $K$. On the locally compact field $K$ we have a Haar measure $\mu_K$, normalized so that $\mu_K(\mathcal{O}_K)=1$. Let $\mathcal{A}$ be the minimal ring of sets generated by $\alpha+t_2^jp^{-1}(S)$, where $S$ is $\mu_K$-measurable, the ``measure'' of a generator of $\mathcal{A}$ is $X^i\mu_K(S)\in\mathbb{R}((X))$. For example $\mu_F(O_F)=1$, where $O_F$ is the rank two ring of integers. This measure extends to a well-defined additive function on $\mathcal{A}$, which is moreover, countably additive in a certain refined sense, \cite[Part~6]{F1}, \cite{F2}, \cite{F3}.
\end{example}
We observe the following:
\begin{enumerate}
\item Essential role was played by a choice of a local parameter $t_2$. An analogous statement will be true in the adelic counterpart of example~\ref{adelictheory.example}.
\item In the mixed characteristic case there are non-linear changes of variables for which the Fubini property of the measure does not hold \cite{FTANLCOVOATDLF}. This could be considered as an example of the non-commutativity inherent in studying $L$-functions of curves over global fields. In this paper, such considerations will not cause a problem.
\end{enumerate}
\section{Analytic Adeles}\label{analadele.subsection}
Let $X$ be a Noetherian scheme, let $M$ be a quasi-coherent sheaf on $X$, and let $T$ be a set of reduced chains on $X$. To such a triple $(X,M,T)$, one can associate an abelian group $\textbf{A}(X,M,T)$ of adeles. We will call these groups ``geometric adeles'' and recommend the following references for details \cite{P1}, \cite{P3}, \cite{RAA}, \cite{OTPBAFS}, \cite{GAATRRFOCOS} and \cite[Section~8]{AITHDLFAA}.
\newline The adelic group $\textbf{A}(X,M,T)$ can be interpreted as a restricted product over $T$ of local factors, which are obtained by localising and completing along each flag. Often, one takes $T$ to be the set of all reduced chains on $X$, and we denote the resulting group by $\textbf{A}(X,M)$. $\textbf{A}(X,M)$ has more structure than that of an abelian group - it admits a semi-cosimplicial structure whose cohomology is that of $M$.
\newline Let $y$ be an irreducible curve on an arithmetic $\mathcal{S}$ as specified in section 3. If $T$ is the set of all reduced chains formed by closed points on $y$, then we will denote $\textbf{A}(\mathcal{S},\mathcal{O}_{\mathcal{S}},T)$ by $\textbf{A}(y)$. Later (remark~\ref{cannotextend.remark}) we will see that this space is ``too big'' for integration, which motivates us to introduce the smaller spaces of ``analytic'' adeles $\mathbb{A}(y)$, following the constructions of \cite[Section~1]{F3}\footnote{The geometric adeles $\textbf{A}(\mathcal{S})$ exist for arbitrary Noetherian schemes $\mathcal{S}$. One can consider what the general definition of the analytic space $\mathbb{A}(\mathcal{S})$ is and what role it plays in algebraic geometry.}.
\newline As mentioned in \ref{good.section}, there are two types of irreducible curves on $\mathcal{S}$ - vertical and horizontal curves. Whilst there is no real difference in the construction of $\mathbb{A}(y)$, we will treat the two cases separately so as to emphasize some important aspects in each setting. In particular, the fibres may well be singular, and the horizontal curves contain archimedean information.
\subsection{Fibres}\label{Fibres.subsubsection}
Let $y$ be an irreducible component of the fibre $\mathcal{S}_{\mathfrak{p}}$ over $\mathfrak{p}\in\text{Spec}(\mathcal{O}_k)$. For any $n\geq0$ and any point $x\in y$, one can define local lifting maps
\[l_{x,y}^n:E_{x,y}^{\oplus n}\rightarrow\begin{cases}
\mathcal{O}_{x,y}, & \text{if $K_{x,y}$ is of equal characteristic}.\\
\mathcal{O}_{x,y}/t^n\mathcal{O}_{x,y}, & \text{otherwise},
\end{cases}\]
and, subsequently, adelic lifting maps
\[L_{y}^n:\mathbb{A}(k(y))^{\oplus n}\rightarrow\begin{cases}
(K_{x,y})_{x\in y} \\
(\mathcal{O}_{x,y}/t^n\mathcal{O}_{x,y})_{x\in y}.
 \end{cases}\]
For details of these constructions, the reader is referred to \cite[Section~1.1]{F3}. The $y$-component of the analytic adeles is the following ring:
\[\mathbb{A}(y)=\{(a_{x,y})_{x\in y}:a_{x,y}\in K_{x,y},\forall n\geq0,~(a_{x,y})+t_y^n\mathcal{O}_y\in\text{im}(L_y^n)\}.\]
Recall that, when $x$ is a singular point on $y$, $K_{x,y}$ is in fact $\prod_{z\in y(x)}K_{x,z}$
\newline For $a_{x,y}\in\mathcal{O}_{x,y}=\prod_{z\in y(x)}\mathcal{O}_{x,z}$, let $\overline{a}_{x,y}=(\overline{a}_{x,z})_{z\in y(x)}$ denote the image of $a_{x,y}$ under the residue map to $\prod E_{x,z}$. We thus have
\[p_y:\mathbb{A}(y)\rightarrow\mathbb{A}(k(y))\]
\[(a_{x,y})\mapsto(\overline{a}_{x,y}).\]
\begin{definition}\label{ranktwoadeles.definition}
Let $\mathcal{S}_{\mathfrak{p}}$ denote the fibre of $\mathcal{S}$ over $\mathfrak{p}$. The $\mathcal{S}_{\mathfrak{p}}$-component $\mathbb{A}(\mathcal{S}_{\mathfrak{p}})$ of the analytic adeles is
\[\mathbb{A}(\mathcal{S}_{\mathfrak{p}})=\prod_{y\in\text{comp}(\mathcal{S}_{\mathfrak{p}})}\mathbb{A}(y).\]
We have a residue map
\[p_{\mathfrak{p}}=(p_y):\mathbb{A}(\mathcal{S}_{\mathfrak{p}})\rightarrow\prod_{y\in\text{comp}(\mathcal{S}_{\mathfrak{p}})}\mathbb{A}(k(y)).\]
\end{definition}
The following example gives a concrete interpretation of analytic adelic spaces and the subsequent remark explains why their measure theory cannot be extended to geometric adeles.
\begin{example}\label{adelictheory.example}
Let $S$ be a two-dimensional algebraic variety over a finite field and let $y$ be a nonsingular irreducible curve on $S$, with function field $k(y)$. Associated to $y$ we have the complete discrete valuation field
\[K_y=\text{Frac}(\widehat{\mathcal{O}_y}).\]
We fix a local parameter and denote it by $t_y$, it can be taken as a second local parameter for all two-dimensional local fields associated to closed points $x$ on $y$. We will refer to it as a local parameter of $y$. The ring $\mathbb{A}(k(y))$ is locally compact and has a Haar measure $\mu_{\mathbb{A}(k(y))}$.  We have a non-canonical isomorphism
\[\mathbb{A}(y)\cong\mathbb{A}(k(y))((t_y)).\]
Let $p$ be the map to $\mathbb{A}(k(y))$ sending a power series to its free coefficient. As in example~\ref{localtheory.example}, one can construct an $\mathbb{R}((X))-$measure $\mu_{y}$ on $\mathbb{A}(y)$. Let $S$ be a measurable subset of $\mathbb{A}(k(y))$, then
\[\mu_y(t_y^ip^{-1}(S))=X^i\mu_{\mathbb{A}(k(y))}(S).\]
\end{example}
\begin{remark}\label{cannotextend.remark}
As in the above situation, let $y$ be an irreducible curve on $\mathcal{S}$. If $\textbf{A}(y)$ is the group of geometric adeles associated to $y$ on $\mathcal{S}$, $M=\mathcal{O}_{\mathcal{S}}$ and $T$ is the set of all reduced chains of the form $x\in y\subset S$, then
\[\textbf{A}(y)=\cup_{r\in\mathbb{Z}}t_y^r\mathbb{A}(y).\]
$\textbf{A}(y)$ can be understood as a restricted direct product of $\textbf{A}(y)$ in which almost all components lie in $\mathbb{A}(y)$. Since the measure of $\mathbb{A}(k(y))$ is infinite, the measure of $\mathbb{A}(y)$ in the previous example is infinite. The geometric adeles are therefore a restricted product with respect to a set of infinite measure, and so we cannot extend the measure to $\textbf{A}$.
\end{remark}
\subsection{Horizontal Curves}
Horizontal curves on $\mathcal{S}$ will play a crucial role in this paper. We will begin by explaining their archimedean content, which is roughly that each horizontal curve intersects the archimedean fibres of the surface, as we now explain in more detail.
\newline By an archimedean fibre, we mean the fibre product
\[\mathcal{S}_{\sigma}=\mathcal{S}\times_{\text{Spec}(\mathcal{O}_k)}k_{\sigma}.\]
where $\sigma$ is an archimedean place of the base field $k$, with corresponding completion $k_{\sigma}$. There is a natural morphism from an archimedean fibre to the generic fibre
\[\mathcal{S}_{\sigma}\rightarrow\mathcal{S}\times_{\mathcal{O}_k}k=\mathcal{S}_{\eta}\cong C.\]
The fibre over any closed point on the generic fibre $C\cong\mathcal{S}_{\eta}$ is a finite reduced scheme. A horizontal curve $y$ on $\mathcal{S}$ is the closure $\overline{\{z\}}$ of a unique closed point $z\in C$, which has residue field $k(z)$. There are only finitely many points on $\mathcal{S}_{\sigma}$ which map to $z$, and they are the primes of $k_{\sigma}\otimes_kk(z)$, which correspond to the infinite places of $k(z)$ extending $\sigma$ on $k$.
\newline At a closed point $\omega$ on $\mathcal{S}_{\sigma}$ we have a two-dimensional local field \[K_{\omega,\sigma}=\text{Frac}(\widehat{\mathcal{O}_{\mathcal{S}_{\sigma},\omega}}).\]
The residue field of $K_{\omega,\sigma}$ is denoted $k_{\sigma}(\omega)$ and is either $\mathbb{R}$ or $\mathbb{C}$. We have, respectively
\[K_{\omega,\sigma}\cong\begin{cases}
\mathbb{R}((t)) \\
\mathbb{C}((t))
\end{cases}\]
Let $y$ be a horizontal curve on $\mathcal{S}$, and let $\sigma$ be an archimedean place of $k$. By the correspondence just described, we have an archimedean place $\omega$ of $k(y)$ and a two-dimensional local field
\[K_{\omega,y}=k(y)_{\omega}((t_y)),\]
where $k(y)_{\omega}$ is the completion of $k(y)$ at $\omega$.
\newline Repeating the construction from \ref{Fibres.subsubsection}, we obtain a lifting map
\[l_{\omega,y}^n:k(y)_{\omega}^{\oplus n}\rightarrow\mathcal{O}_{\omega,y}\cong\begin{cases}\mathbb{R}[[t]], \\ \mathbb{C}[[t]].\end{cases}\]
Also, at a closed point $x\in y$, we have a local lifting
\[l_{x,y}^n:E_{x,y}\rightarrow\mathcal{O}_{x,y}.\]
Altogether, we have an adelic map:
\[L_{y}^r:\oplus_{x\in y}l^n_{x,y}\oplus_{\omega}l_{\omega,y}^n:\mathbb{A}(k(y))^{\oplus n}\rightarrow\prod_{x\in y}K_{x,y}\prod_{\omega}K_{\omega,y}.\]
\begin{definition}\label{analytichorizontal.definition}
Let $y$ be a horizontal curve on $\mathcal{S}$. The $y$-component of the analytic adelic space is:
\[\mathbb{A}(y)=\{((a_{x,y})_{x\in y},(a_{\omega,y})_{\omega})\in\prod_{x\in y}K_{x,y}\prod_{\omega}K_{\omega,y}:\forall n\geq1,~((a_{x,y})_{x\in y},(a_{\omega,y})_{\omega})\in\text{im}(L_y^n)\}.\]
The residue maps $\mathcal{O}_{x,z}\rightarrow E_{x,z}$ and $\mathcal{O}_{\omega,y}\rightarrow k(y)_{\omega}$ induce
\[p_y:\mathbb{A}(y)\rightarrow\mathbb{A}(k(y)).\]
\end{definition}
\subsection{Additive Normalization}
We want to extend example~\ref{adelictheory.example} to the analytic adelic rings associated to fibres and horizontal curves. If $y$ is a curve on $\mathcal{S}$ of either description, the additive group of the ring $\mathbb{A}(k(y))$ of adeles on the function field $k(y)$ is a locally compact abelian group. It thus has a Haar measure, which is unique up to scalar multiplication. The $\mathbb{R}((X))$-measure on $\mathbb{A}(y)$ will depend on a choice of normalization of the Haar measure on $\mathbb{A}(k(y))$.
\newline Let $F$ be a two-dimensional local field. If $F$ is non-archimedean and $\psi_F:F\rightarrow\mathbb{C}^{\times}$ is a character, then we will refer to the orthogonal complement of $O_F$ as the conductor of $\psi_F$. If $F$ is an archimedean two-dimensional local field, the conductor is the orthogonal complement of $\mathcal{O}_F$.
\newline When $F=K_{x,z}$ (resp. $K_{\omega,y}$), we denote $\psi_F$ by $\psi_{x,z}$ (resp. $\psi_{\omega,y}$). The aim is to define the normalization of the measure on $\mathbb{A}(k(y))$ through the characters $\psi_{x,z}$.
\begin{lemma}\label{duality.lemma}
For any closed point $x$ on the fibre $\mathcal{S}_{\mathfrak{p}}$ of $\mathcal{S}$, let $z$ be a branch of an irreducible component of $\mathcal{S}_{\mathfrak{p}}$ at $x$. There are characters $\psi_{x,z}$ of the two-dimensional local fields $K_{x,z}$ such that if
\[\psi_{x,\mathfrak{p}}=\otimes_{z\in\mathcal{S}_{\mathfrak{p}}(x)}\psi_{x,z},\]
then the following is defined on $\mathbb{A}(\mathcal{S}_{\mathfrak{p}})$:
\[\psi_{\mathfrak{p}}=\otimes_{x\in\mathcal{S}_{\mathfrak{p}}}\psi_{x,\mathfrak{p}}\]
Moreover, the conductor $A_{x,\mathfrak{p}}$ is commensurable with $O_{x,\mathfrak{p}}$, with equality at almost all $x\in\mathcal{S}_{\mathfrak{p}}$, including the singular points. There is a non-trivial
\[\varphi_{\mathfrak{p}}:\mathbb{A}(k(\mathcal{S}_{\mathfrak{p}}))\rightarrow\mathbb{C}^{\times}\]
such that
\[\psi_{\mathfrak{p}}=\varphi_{\mathfrak{p}}p_{\mathfrak{p}}.\]
Similarly, if $y$ is a horizontal curve, for all points $x\in y$ and archimedean places $\omega$ of $k(y)$, there are local characters $\psi_{x,y}$ and $\psi_{\omega,y}$ such that
\[\psi_y=\otimes_{x\in y}\psi_{x,y}\otimes_{\omega}\psi_{\omega,y}\]
is defined on $\mathbb{A}(y)$ with the same properties.
\end{lemma}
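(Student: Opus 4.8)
The plan is to produce all of the characters $\psi_{x,z}$ at once from a single global ``differential idele'' on $\mathcal{S}$, in the style of the one-dimensional construction recalled in Section~\ref{tate.section} but using the residue maps attached to complete flags; this is the arithmetic-surface analogue of the construction of \cite[Section~1]{F3} (see also \cite{GAATRRFOCOS}, \cite{AITHDLFAA}). First I would fix the standard additive character $\psi_{0}$ of $\mathbb{A}_{\mathbb{Q}}/\mathbb{Q}$, with local component of conductor $\mathbb{Z}_{p}$ on each $\mathbb{Q}_{p}$ and the usual component on $\mathbb{R}$; for a finite residue characteristic this amounts to the standard additive character $\psi_{\mathbb{F}_{p}}$ of $\mathbb{F}_{p}$. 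Since $\mathcal{S}$ is regular and $\mathcal{B}=\Spec(\mathcal{O}_{k})$ is Dedekind, $\pi$ is a local complete intersection and the relative dualizing sheaf $\omega_{\mathcal{S}/\mathcal{B}}$ is invertible; choose a nonzero rational section $\omega$ of $\omega_{\mathcal{S}/\mathcal{B}}(\sum_{\mathfrak{p}}\mathcal{S}_{\mathfrak{p}})$, i.e.\ a rational relative differential with at worst a simple pole along each special fibre. For a complete flag $x\in z\subset\mathcal{S}$ with $z$ a branch of a component $y$ of $\mathcal{S}_{\mathfrak{p}}$, the section $\omega$ localises to a nonzero local differential $\omega_{x,z}$ with a simple pole along $z$, and I define $\psi_{x,z}$ on $K_{x,z}$ as the composite of the vertical residue $K_{x,z}\to E_{x,z}$ determined by $\omega_{x,z}$ with the local component $\psi^{E}_{x,z}$ at $x$ of the character $\varphi_{y}$ of $\mathbb{A}(k(y))$ attached, via the one-dimensional residue pairing of the curve $y$ and $\psi_{\mathbb{F}_{p}}$, to the Poincar\'e residue $\omega_{y}$ of $\omega$ along $y$.

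Next I would compute the conductor $A_{x,z}$. Because $\omega_{x,z}$ has a simple pole along $z$, the vertical residue of $a\,\omega_{x,z}$ involves only the coefficients of $a$ of non-positive order in the fibre parameter $t_{y}$, and one finds $A_{x,z}=\{a\in\mathcal{O}_{x,z}:\overline{a}_{x,z}\in A^{E}_{x,z}\}$, where $A^{E}_{x,z}$ is the conductor of $\psi^{E}_{x,z}$ on $E_{x,z}$; in particular $A_{x,z}$ is always a nonzero fractional $O_{x,z}$-module, hence commensurable with $O_{x,z}$, and $A_{x,z}=O_{x,z}$ as soon as $A^{E}_{x,z}=\mathcal{O}_{E_{x,z}}$ and $t_{y}$ is a local parameter at $x$. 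These conditions fail only for the finitely many $x$ lying on the divisor of $\omega_{y}$ on $y$, on a fibre of bad reduction, or above the ramification of $k/\mathbb{Q}$. The decisive point is the behaviour at a \emph{split} ordinary double point $x$ of $\mathcal{S}_{\mathfrak{p}}$, where $K_{x,\mathcal{S}_{\mathfrak{p}}}=K_{x,z_{1}}\times K_{x,z_{2}}$: there a local generator of $\omega_{\mathcal{S}/\mathcal{B}}$ near $x$ is a form with logarithmic poles along $z_{1}$ and $z_{2}$ whose Poincar\'e residues on the two branches are $k(\mathfrak{p})$-rational and opposite, so, setting $\psi_{x,\mathfrak{p}}=\psi_{x,z_{1}}\otimes\psi_{x,z_{2}}$, each factor has conductor exactly $O_{x,z_{i}}$ and hence $A_{x,\mathfrak{p}}=O_{x,z_{1}}\times O_{x,z_{2}}=O_{x,\mathfrak{p}}$. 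Getting this node computation right, and it is here that splitness is used in an essential way, is what I expect to be the main obstacle.

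Then I would assemble the global objects. Put $\psi_{\mathfrak{p}}=\otimes_{x\in\mathcal{S}_{\mathfrak{p}}}\psi_{x,\mathfrak{p}}$: this product is finite on every element of $\mathbb{A}(\mathcal{S}_{\mathfrak{p}})$, since by the restricted-product description of Definition~\ref{ranktwoadeles.definition} such an element has $a_{x,\mathfrak{p}}\in O_{x,\mathfrak{p}}$ for all but finitely many $x$, while $O_{x,\mathfrak{p}}\subseteq A_{x,\mathfrak{p}}$ for all but finitely many $x$ by the previous paragraph; hence $\psi_{\mathfrak{p}}$ is defined on $\mathbb{A}(\mathcal{S}_{\mathfrak{p}})$. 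By construction each $\psi_{x,z}$ depends only on the residue $\overline{a}_{x,z}\in E_{x,z}$, so with $\varphi_{\mathfrak{p}}=\otimes_{y\in\text{comp}(\mathcal{S}_{\mathfrak{p}})}\varphi_{y}$ one gets $\psi_{\mathfrak{p}}=\varphi_{\mathfrak{p}}\circ p_{\mathfrak{p}}$; and each $\varphi_{y}$ is non-trivial, indeed realises the self-duality of $\mathbb{A}(k(y))$ with $k(y)^{\perp}=k(y)$, by the residue theorem on the smooth model of $y$ over $k(\mathfrak{p})$ applied to $\omega_{y}$, so $\varphi_{\mathfrak{p}}$ is non-trivial. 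This establishes all the assertions for special fibres.

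The horizontal case is entirely parallel and I would only record the differences. Now $k(y)$ is a number field, $\mathbb{A}(k(y))$ is its usual ring of adeles, and $\varphi_{y}$ is the standard self-dual character of $\mathbb{A}(k(y))/k(y)$ built from $\psi_{0}$ and $\mathrm{Tr}_{k(y)/\mathbb{Q}}$ (equivalently, from a rational section of $\Omega^{1}_{k(y)/\mathbb{Q}}$, so the different of $k(y)/\mathbb{Q}$ contributes to the finitely many exceptional non-archimedean $x$). The non-archimedean flags $x\in y$ are treated exactly as above; at an archimedean place $\omega$ of $k(y)$ one uses $K_{\omega,y}\cong k(y)_{\omega}((t_{y}))$ and defines $\psi_{\omega,y}$ from the standard character of $k(y)_{\omega}\in\{\mathbb{R},\mathbb{C}\}$ composed with the $t_{y}$-residue, its conductor (the orthogonal complement of $\mathcal{O}_{\omega,y}$) then being commensurable with $O_{\omega,y}$. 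Assembling $\psi_{y}=\otimes_{x\in y}\psi_{x,y}\otimes_{\omega}\psi_{\omega,y}$ on $\mathbb{A}(y)$ exactly as before yields a character with $\psi_{y}=\varphi_{y}\circ p_{y}$ and the stated conductor properties, which would complete the proof.
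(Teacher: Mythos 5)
The paper's own ``proof'' is a one-line citation to \cite[Proposition~27]{F3}, so what you have supplied is a reconstruction rather than a comparison of routes; the overall strategy (build the local characters from a global rational relative differential by two-dimensional residues, identify the conductor from the pole order of that differential along each flag, and factor through the residue maps $p_{y}$) is indeed the one that Fesenko's construction is built on, and your treatment of the nonsingular flags, the global well-definedness of $\psi_{\mathfrak{p}}$ via the restricted product, and the factorisation $\psi_{\mathfrak{p}}=\varphi_{\mathfrak{p}}\circ p_{\mathfrak{p}}$ are all in order.

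The step I do not think you can get away with as written is the node computation, and it matters because the lemma explicitly asserts equality $A_{x,\mathfrak{p}}=O_{x,\mathfrak{p}}$ at the singular points. You take $\omega$ to be a rational section of $\omega_{\mathcal{S}/\mathcal{B}}\bigl(\sum_{\mathfrak{p}}\mathcal{S}_{\mathfrak{p}}\bigr)$ so that it has an extra simple pole along each special fibre, and at a split node $x$ you appeal to the local structure of $\omega_{\mathcal{S}/\mathcal{B}}$ (log poles with opposite residues on the two branches) to conclude $A_{x,z_i}=O_{x,z_i}$. But near the node the fibre divisor is cut out by $\pi=st$, so twisting by $\mathcal{O}(\mathcal{S}_{\mathfrak{p}})$ multiplies a local generator $\frac{ds}{s}=-\frac{dt}{t}$ of $\omega_{\mathcal{S}/\mathcal{B}}$ by $(st)^{-1}$: the resulting form has pole order two in the direction transverse to each branch, not one. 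Consequently the vertical residue of $a\omega$ along $z_i$ is no longer ``the free coefficient of $a$'' and the conductor shifts by a power of $t_{1,x,z_i}$; a direct calculation with a logarithmic form on the branches already produces $A_{x,z_i}=t_{1,x,z_i}O_{x,z_i}$ rather than $O_{x,z_i}$. This is precisely the delicate point that needs the input of the cited proposition (or an explicit choice of $\omega$ whose restriction to each branch has neither zero nor pole at $x$, together with an argument that such a choice exists compatibly for all nodes simultaneously), and you do not supply it. Note also that the paper's own Definition~\ref{dxz.definition} assigns $d(x,z)=-1$ at split nodes, which under $A_{x,z}=t_{1,x,z}^{d(x,z)}O_{x,z}$ is itself inconsistent with the lemma's claim of equality at the singular points; your proof is obliged to take a side on this, and at present it quietly asserts the outcome rather than deriving it from the stated choice of $\omega$.
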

\begin{proof}
\cite[Proposition~27]{F3}.
\end{proof}
From now on we fix such $\psi_{x,\mathfrak{p}}$ (resp. $\psi_{x,y}$) for all closed points on fibres $\mathcal{S}_{\mathfrak{p}}$ (resp. horizontal curves $y$).
\begin{definition}\label{dxz.definition}
Let $y$ be an irreducible component of a fibre. If $x$ is a nonsingular point on $y$, define $d(x,y)$ by
\[A_{x,y}=t^{d(x,y)}_{1,x,y}O_{x,y},\]
where $A_{x,y}$ is the conductor of $\psi_{x,y}$ and $t_{1,x,y}$ denotes the local parameter of $K_{x,y}$. If $x$ is a split ordinary double point on $y$, and $z,z'$ are local branches of $y$ at $x$, we will write $d(x,z)=d(x,z')=-1$.
\end{definition}
\begin{lemma}\label{classicalformula.lemma}
Let $\mathcal{S}_{\mathfrak{p}}$ be a smooth fibre on $\mathcal{S}$, then
\[\prod_{x\in\mathcal{S}_{\mathfrak{p}}}q_{x,\mathfrak{p}}^{d(x,\mathfrak{p})}=1\]
\end{lemma}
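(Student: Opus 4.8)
The plan is to reduce the identity to the Artin product formula on the smooth projective curve $\mathcal{S}_{\mathfrak{p}}$ over its finite field of constants, by tracing the two–dimensional conductors of Definition~\ref{dxz.definition} through the factorisation $\psi_{\mathfrak{p}}=\varphi_{\mathfrak{p}}\circ p_{\mathfrak{p}}$ furnished by Lemma~\ref{duality.lemma}. First I would dispose of the combinatorics: since $\mathcal{S}_{\mathfrak{p}}$ is smooth, each of its irreducible components $y$ is a smooth projective curve over the finite field $q(y)$, every closed point $x$ is nonsingular with $\mathcal{S}_{\mathfrak{p}}(x)=\{y\}$, and the product splits over the components; thus $d(x,\mathfrak{p})=d(x,y)$, $q_{x,\mathfrak{p}}=q(x,y)=q(y)^{[k(x):q(y)]}$, and it suffices to prove $\sum_{x\in y}d(x,y)\,[k(x):q(y)]=0$ for each $y$.

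Next I would carry out the local computation of the conductor $A_{x,y}=O_{x,y}^{\perp}$. Writing $O_{x,y}$ as the rank–two ring of integers and using Lemma~\ref{duality.lemma}, $\psi_{x,y}$ is the twist, by (a lift of) the local component of $\varphi_{\mathfrak{p}}$, of the fixed self-dual reference character of $K_{x,y}$; a direct calculation, in the spirit of Examples~\ref{localtheory.example} and~\ref{adelictheory.example}, shows that orthogonal complementation alters $O_{x,y}$ only in the first local direction, giving $A_{x,y}=c_x^{-1}O_{x,y}$ for a suitable $c_x\in E_{x,y}^{\times}$, hence $d(x,y)=-v_1(c_x)$, with $d(x,y)=0$ for all but finitely many $x$. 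The collection $(c_x)_{x\in y}$ is the local datum of the quotient of $\varphi_{\mathfrak{p}}$ by the self-dual reference.

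The crux is then to see that $(c_x)_{x\in y}$ is the idelic datum of a non-zero rational function $a\in k(y)^{\times}$, rather than (as the bare self-dual normalisation would give) the idele of a canonical divisor on $\mathcal{S}_{\mathfrak{p}}$, which would produce $q(y)^{2-2g}$ instead of $1$. This is exactly the point at which the renormalisation of the introduction enters: passing from $\zeta(\mathcal{S},s)$ to $Z(\mathcal{S},\{k_i\},s)$ — equivalently inserting the power of $\zeta(\mathbb{P}^1(\mathcal{O}_k),s)$ — is what reconciles the additive normalisation forced by the lifted measure (so that $\mu(O_{x,y})=1$) with the self-dual normalisation required for global two–dimensional duality, and it is this reconciliation that makes $\varphi_{\mathfrak{p}}$ differ from the reference by an \emph{automorphic} (not merely adelic) datum; for $C$ of genus one this is Fesenko's situation, where one may take $a=1$ and every $d(x,y)$ already vanishes. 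Granting this, the product formula for the global field $k(y)$ yields $\sum_{x\in y}d(x,y)\,[k(x):q(y)]=-\sum_{x\in y}v_x(a)\,[k(x):q(y)]=0$, i.e. $\prod_{x\in\mathcal{S}_{\mathfrak{p}}}q_{x,\mathfrak{p}}^{\,d(x,\mathfrak{p})}=\prod_x|a|_x^{-1}=1$.

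I expect the only genuine obstacle to be the middle identification — that the excess conductors assemble into a principal rather than a canonical divisor on $\mathcal{S}_{\mathfrak{p}}$; the local conductor computation and the appeal to the product formula are routine once that is in place.
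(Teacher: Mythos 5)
Your proposal hinges on a ``key identification'' that you yourself flag as the only nontrivial step, namely that the excess conductors $(c_x)_{x\in y}$ assemble into a \emph{principal} divisor on $\mathcal{S}_{\mathfrak{p}}$ rather than a canonical one, and you attribute this to the renormalisation by powers of $\zeta(\mathbb{P}^1(\mathcal{O}_k),s)$. Neither part survives scrutiny. The renormalisation in Definition~\ref{renormalizedcurvemeasure.definition} (and the $\xi(\mathbb{P}^1(\mathcal{O}_k),s)^{1-g}$ factor in the zeta integral) is applied to the \emph{multiplicative} measure after the additive characters $\psi_{x,\mathfrak{p}}$ and the exponents $d(x,\mathfrak{p})$ have already been fixed by Lemma~\ref{duality.lemma}; it does not modify $\varphi_{\mathfrak{p}}$ or its conductor in any way, so it cannot turn a canonical divisor into a principal one. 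And the paper's own proof goes the opposite way: it identifies $\sum_v m_v v$ explicitly with the canonical divisor $\mathfrak{C}$ on the fibre and invokes $\deg\mathfrak{C}=2g-2$. A rational function $a\in k(y)^{\times}$ with $-v_x(a)=d(x,y)$ for all $x$ would force $\mathfrak{C}$ to be principal, which happens only when $g=1$; so your crucial step is false for exactly the higher-genus curves this paper is about, and your appeal to the product formula therefore proves the wrong thing.

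There is a further, self-undermining tension you should have noticed: the computation immediately after Definition~\ref{renormalizedcurvemeasure.definition} gives
\[M_{\mathbb{A}(\mathcal{S}_{\mathfrak{p}})}(O\mathbb{A}(\mathcal{S}_{\mathfrak{p}}))=\prod_{x\in\mathcal{S}_{\mathfrak{p}}}q_{x,\mathfrak{p}}^{d(x,\mathfrak{p})/2}=q_{\mathfrak{p}}^{1-g},\]
so $\prod_x q_{x,\mathfrak{p}}^{d(x,\mathfrak{p})}=q_{\mathfrak{p}}^{2-2g}$, not $1$; the correcting factor $q(\mathcal{S}_{\mathfrak{p}})^{g-1}$ in the definition of $\mu_{\mathbb{A}(\mathcal{S}_{\mathfrak{p}})}$ exists precisely because this product is \emph{not} $1$. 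If your argument were correct, $M(O\mathbb{A})$ would already be $1$ and no renormalisation would be needed at all. The lemma's displayed ``$=1$'' is a slip; the content of the paper's proof (the conductor idele of a self-dual character on $\mathbb{A}(k(\mathcal{S}_{\mathfrak{p}}))$ has degree $2g-2$) is correct modulo a sign typo, and is what the later computation actually uses. Your reduction to the per-component identity and the computation $d(x,y)=-v_1(c_x)$ is fine; the gap is entirely in the unjustified and in fact false assertion that $(c_x)$ is the idele of a rational function.
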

\begin{proof}
Let $k(\mathcal{S}_{\mathfrak{p}})$ denote the function field of $\mathcal{S}_{\mathfrak{p}}$ and let $v$ be a place of this global field. The residue field at $v$ has cardinality $q_v$. The lemma then follows from the representation of the canonical divisor $\mathfrak{C}$ on $z$ as
\[\mathfrak{C}=\sum_vm_vv,\]
where $P_v^{m_v}$ is the $v$-component of the conductor of the standard character on $\mathbb{A}(k(z))$. We know:
\[N(P_v^{m_v})=q_v^{m_v}=q^{\text{deg}(v)m_v}\]
So
\[\prod_vN(P_v^{m_v})=q^{-\text{deg}(\mathfrak{C})},\]
and formula follows from the fact that $\deg(\mathfrak{C})=2g-2$.
\end{proof}
Let $\mu_{\mathbb{A}(k(y))}$ be the Haar measure on $\mathbb{A}(k(y))$ which is self dual with respect to the character $\varphi_y$ from lemma~\ref{duality.lemma}.
If $y$ is an irreducible curve on $\mathcal{S}$, let $L$ denote a measurable subset of $\mathbb{A}(k(y))$ with respect to the above Haar measure. Consider the lifted measure $M_{\mathbb{A}(y)}$ on $\mathbb{A}(y)$ such that
\[M_{\mathbb{A}(y)}(t_{y}^ip_{y}^{-1}(L))=X^i\mu_{\mathbb{A}(k(y))}(L).\]
For example, let $\mathcal{S}_{\mathfrak{p}}$ be a smooth fibre of $\mathcal{S}$ over $\mathfrak{p}$. Consider the subset
\[O\mathbb{A}(\mathcal{S}_{\mathfrak{p}})=\prod_{x\in\mathcal{S}_{\mathfrak{p}}}O_{x,\mathfrak{p}},\]
then
\begin{align}
M_{\mathbb{A}(\mathcal{S}_{\mathfrak{p}})}(O\mathbb{A}(\mathcal{S}_{\mathfrak{p}}))&=\mu_{\mathbb{A}(k(\mathcal{S}_{\mathfrak{p}}))}(\prod_{x\in\mathcal{S}_{\mathfrak{p}}}O_x),\nonumber\\
&=\prod_{x\in\mathcal{S}_{\mathfrak{p}}}\mu_{k(\mathcal{S}_{\mathfrak{p}})_x}(O_x)\nonumber\\
&=\prod_{x\in\mathcal{S}_{\mathfrak{p}}}q_{x,\mathfrak{p}}^{d(x,y)/2}\nonumber\\
&=q_{\mathfrak{p}}^{1-g(\mathcal{S}_{\mathfrak{p}})},\nonumber
\end{align}
where $g(\mathcal{S}_{\mathfrak{p}})=g$ denotes the genus of the special fibre $\mathcal{S}_{\mathfrak{p}}$.
\begin{definition}\label{renormalizedcurvemeasure.definition}
Let $g$ be the genus of $C$ and $\mathcal{S}_{\mathfrak{p}}$ be a smooth fibre, define:
\[\mu_{\mathbb{A}(\mathcal{S}_{\mathfrak{p}})}=q(\mathcal{S}_{\mathfrak{p}})^{g-1}M_{\mathbb{A}(y)}.\]
If $\mathcal{S}_{\mathfrak{p}}$ is a singular fibre, then
\[\mu_{\mathbb{A}(\mathcal{S}_{\mathfrak{p}})}=M_{\mathbb{A}(\mathcal{S}_{\mathfrak{p}})}.\]
If $y$ is a horizontal curve, let $\mu_{\mathbb{A}(y)}=M_{\mathbb{A}(y)}$.
\end{definition}
So, for all smooth fibres and horizontal curves $\mu_{\mathbb{A}(y)}(O\mathbb{A}(y))=1$.
\begin{definition}
A simply integrable function on $\mathbb{A}(y)$ is a finite linear combination of characteristic functions of measurable sets under the measure $\mu_{\mathbb{A}(y)}$.
\end{definition}
In this paper we will only integrate these simply integrable functions, for which the integral is the linear combination of the measures. For a more general theory, see \cite[1.3]{F3}.
\section{Zeta Integrals on $\mathcal{S}$}\label{zetaintegral.section}
Two-dimensional zeta integrals were first studied by Fesenko for proper regular models of elliptic curves \cite{F4,F3}. We extend his results to a model $\mathcal{S}$ as in \ref{good.section}, following the sketch in \cite[Part~57]{F4}.
\subsection{Multiplicative Normalization}
First, we recall the relationship between the measure on the additive and multiplicative group of one-dimensional local fields and adeles.
\begin{example}\label{addvsmultdim1.example}
Let $k$ be a number field. At each non-archimedean prime $\mathfrak{p}$ we have a normalized Haar measure $d\mu$ on the locally compact additive abelian group $k_{\mathfrak{p}}$, which has finite residue field of cardinality $q(\mathfrak{p})$. One then integrates on the multiplicative group $k_{\mathfrak{p}}^{\times}$ with the measure
\[(1-q(\mathfrak{p})^{-1})\frac{d\mu}{~~|~|_{k_{\mathfrak{p}}}}.\]
In turn, one integrates over the idele group $\mathbb{A}_k^{\times}$ with the tensor product of these measures.
\end{example}
Similarly, on an arithmetic surface, we need a measure compatible with the multiplicative structure of a two-dimensional local field. Let $F$ be such a field with local parameter $t_2$ and let $t_1$ be a lift of the local parameter of the residue field of $F$. Let $U$ denote the group of principal units, and let $q$ be the cardinality of the final (finite) residue field. One can decompose the multiplicative group $F^{\times}$ as follows:
\[F^{\times}=<t_1>^{\times}<t_2>^{\times}U.\]
Using this decomposition we define the $\mathbb{R}((X))-$valued module $|~|_F$ by
\[|t_2^it_1^ju|_F=q^{-j}X^i.\]
When $F=K_{x,y}$, we use the notation $|~|_F=|~|_{x,y}$.
\newline Let $z$ denote a local branch of an irreducible curve on $\mathcal{S}$ at a point $x$. Motivated by example~\ref{addvsmultdim1.example}, we will use the following measure on $K_{x,z}^{\times}$:
\[M_{K_{x,z}^{\times}}=\frac{M_{K_{x,z}}}{(1-q(x,z)^{-1})|~|_{x,z}}.\]
\begin{definition}
A simply integrable function with respect to the above measure is a finite linear combination of characteristic functions of measurable sets. 
\end{definition}
\begin{example}\label{nonsingular.example}
Let $\mathcal{S}_{\mathfrak{p}}$ be a smooth fibre of $\mathcal{S}$ and consider the following measurable function for each closed $x\in y$
\[f_{x,\mathcal{S}_{\mathfrak{p}}}=|~|^s_{x,\mathcal{S}_{\mathfrak{p}}}\text{char}(O_{x,\mathfrak{p}}),\]
and define $f_{\mathcal{S}_{\mathfrak{p}}}=\otimes_{x\in\mathcal{S}_{\mathfrak{p}}}f_{x,\mathcal{S}_{\mathfrak{p}}}$. Then, $f_{\mathcal{S}_{\mathfrak{p}}}$ is integrable and
\[\int_{\mathbb{A}(\mathcal{S}_{\mathfrak{p}}^{\times})}f_{\mathcal{S}_{\mathfrak{p}}}d\mu_{\mathbb{A}(\mathcal{S}_{\mathfrak{p}})}^{\times}=\zeta(\mathcal{S}_{\mathfrak{p}},s)\prod_{x\in\mathcal{S}_{\mathfrak{p}}}q(x,\mathcal{S}_{\mathfrak{p}})^{(d(x,\mathcal{S}_{\mathfrak{p}}))(1-s)}=\zeta(\mathcal{S}_{\mathfrak{p}},s)\prod_{x\in\mathcal{S}_{\mathfrak{p}}}q(x,\mathcal{S}_{\mathfrak{p}})^{(1-g)(1-s)}.\]
\end{example}
In our case the special fibres have at worst split ordinary double singularities and we will use an ad hoc variant of the function in example~\ref{nonsingular.example} to recover the corresponding factor of the zeta-function - for a more complete approach see \cite[36,~Remark~1,~37]{F3}. When $x$ is a singular point of $\mathcal{S}_{\mathfrak{p}}$, define
\[M_{K_{x,\mathcal{S}_{\mathfrak{p}}}^{\times}}=\otimes_{z\in \mathcal{S}_{\mathfrak{p}}(x)}M_{K^{\times}_{x,z}}.\]
\subsection{Zeta Integrals on the Projective Line}
We would like to take the product over all the fibres in order to obtain the non-archimedean part of the zeta function of $\mathcal{S}$, including the conductor. Unfortunately, the product diverges due to the additional factors appearing in example~\ref{nonsingular.example}.
\newline To resolve this, we begin by observing something complementary that happens when we apply the adelic analysis on the scheme $\mathcal{P}:=\mathbb{P}^1(\mathcal{O}_k)$. At a non-archimedean place $\mathfrak{p}$ of the base field $k$, the fibre $\mathcal{P}_{\mathfrak{p}}=\mathbb{P}^1(k(\mathfrak{p}))$. At a closed point $x\in\mathcal{P}_{\mathfrak{p}}$, define
\[g_{x,\mathfrak{p}}=\text{char}(O_{x,\mathcal{P}_{\mathfrak{p}}}),\]
and subsequently,
\[g_{\mathfrak{p}}=\otimes_{x\in\mathcal{P}_{\mathfrak{p}}}g_{x,\mathfrak{p}}.\]
Then
\[\int_{\mathbb{A}(\mathcal{P}_{\mathfrak{p}})^{\times}}g_{\mathfrak{p}}|~|^s_{\mathcal{P}_{\mathfrak{p}}}d\mu_{\mathbb{A}(\mathcal{P}_\mathfrak{p})^{\times}}=\zeta(\mathcal{P}_{\mathfrak{p}},s)\prod_{x\in\mathcal{P}_{\mathfrak{p}}} q_x^{(1-s)}.\]
Combining this computation with examples~\ref{nonsingular.example} and~\ref{singular.example} (later), we see that if $\mathfrak{p}$ is a good prime of $C$, then
\[\int_{\mathbb{A}(\mathcal{S}_{\mathfrak{p}})^{\times}}f_{\mathfrak{p}}|~|^s_{\mathcal{S}_{\mathfrak{p}}}d\mu_{\mathbb{A}(\mathcal{S}_{\mathfrak{p}})^{\times}}\cdot\left(\int_{\mathbb{A}(\mathcal{P}_{\mathfrak{p}})^{\times}}g_{\mathfrak{p}}|~|^s_{\mathcal{P}_{\mathfrak{p}}}d\mu_{\mathbb{A}(\mathfrak{p})^{\times}}\right)^{g-1}.\]
\[=\zeta(\mathcal{P}_{\mathfrak{p}},s)^{1-g}\zeta(\mathcal{S}_{\mathfrak{p}},s).\]
This is essentially a non-archimedean factor of the zeta integral in~\ref{zetaintegral.subsubsection} below - complications will arise at the bad primes.
\newline The $(1-g)$th power of the zeta integral on $\mathbb{P}^1(\mathcal{O}_k)$ conveniently cancels the divergent part of the zeta integral over $\mathcal{S}$. But that is not all, as by a completion process for the zeta function of $\mathbb{P}^1(\mathcal{O}_k)^{1-g}$, we can recover the gamma factor of $\mathcal{S}$ up to an $[s\mapsto2-s]-$invariant rational function. We will now make this idea precise.
\subsection{The Gamma Factor}
The gamma factor for the zeta function of $\mathcal{S}$ is the quotient of the gamma factors of it's Hasse--Weil decomposition, i.e..
\[\Gamma(\mathcal{S},s)=\frac{\Gamma(k,s)\Gamma(k,s-1)}{\Gamma(C,s)}.\]
The renormalizing factor in fact induces the gamma factor in a very natural way. The zeta function of $\mathbb{P}^1(\mathcal{O}_k)$ is very simple:
\[\zeta(\mathbb{P}^1(\mathcal{O}_k),s)=\zeta(k,s)\zeta(k,s-1),\]
and so its gamma factor is
\[\Gamma(\mathbb{P}^1(\mathcal{O}_k),s)=\Gamma(\mathbb{C},s)^{r_2}\Gamma(\mathbb{R},s)^{r_1}\Gamma(\mathbb{C},s-1)^{r_2}\Gamma(\mathbb{R},s-1)^{r_1}.\]
Applying well-known identities of the gamma function,
\begin{align}
\Gamma(\mathbb{P}^1(\mathcal{O}_k),s)^{1-g}&=\frac{1}{(\Gamma(\mathbb{C},s)^{r_2}\Gamma(\mathbb{C},s-1)^{r_2}\Gamma(\mathbb{R},s)^{r_1}\Gamma(\mathbb{R},s-1)^{r_1})^{g-1}}\nonumber\\
&=\frac{1}{(\Gamma(\mathbb{C},s)^{r_2}\Gamma(\mathbb{C},s-1)^{r_2}\Gamma(\mathbb{C},s-1)^{r_1})^{g-1}}\nonumber\\
&=\frac{\pi^{-r_2(g-1)}(s-1)^{r_2(g-1)}}{(\Gamma(\mathbb{C},s)^{2r_2}\Gamma(\mathbb{C},s-1)^{r_1})^{g-1}}\nonumber\\
&=\frac{\pi^{-(r_1+r_2)(g-1)}(s-1)^{(r_1+r_2)(g-1)}}{(\Gamma(\mathbb{C},s)^{r_1+2r_2})^{g-1}}\nonumber\\
&=\frac{\pi^{-(r_1+r_2)(g-1)}(s-1)^{(r_1+r_2)(g-1)}}{R(s)}\Gamma(\mathcal{S},s)\nonumber\\
&=Q(s)\Gamma(\mathcal{S},s)\nonumber,
\end{align}
where
\[Q(s)=\frac{\pi^{-(r_1+r_2)(g-1)}(s-1)^{(r_1+r_2)(g-1)}}{R(s)},\]
so
\[Q(2-s)=\pm Q(s).\]
We thus see that completing the normalizing factor gives us the transcendental part of $\Gamma(\mathcal{S},s)$.
\subsection{Integration on Horizontal Curves}
We will remind ourselves of the Haar measure on $\mathbb{R}$ and $\mathbb{C}$.
\[\mu_{k_{\sigma}(\omega)}=\begin{cases}
\text{Lebesgue measure, }dx & k_{\sigma}(\omega)=\mathbb{R}, \\
\text{twice Lebesgue measure, }2dz & k_{\sigma}(\omega)=\mathbb{C}.
\end{cases}\]
One then integrates on the multiplicative group $\mathbb{R}^{\times}$ (resp. $\mathbb{C}^{\times}$) with the measure $\frac{dx}{|x|}$ (resp. $\frac{2dx}{|z|^2}$).
\begin{example}\label{archimedean.example}
We have the well-known identities:
\[\int_{\mathbb{R}^{\times}}e^{-\pi x^2}|x|^s\frac{dx}{x}=\Gamma(\mathbb{R},s),\]
\[\int_{\mathbb{C}^{\times}}e^{-2\pi |z|^2}|z|^s\frac{2dz}{|z|^2}=2\pi\Gamma(\mathbb{C},s).\]
These are precisely the Gamma factors required for the Dedekind-zeta function of a number field at a real (respectively complex) place.
\end{example}
We will integrate on $K_{\omega,y}$ with the lifted measure from $k(y)_{\omega}$. 
\begin{definition}
A basic measurable set on $K_{\omega,y}$ is one of the form $t_{\omega}^i\mu_{k(y)_{\omega}}(A)$, where $A\subset k(y)_{\omega}$ is measurable with respect to the Haar measure on the archimedean local field $k(y)_{\omega}$. This can be extended to finite unions by linearity.
\end{definition}
At all closed points $x$ of $y$ we have the two-dimensional local field $K_{x,y}$ and the natural lifted Haar measure as described in 6.1. Altogether, we have a measure on $\mathbb{A}(y)=\prod_x'K_{x,y}\prod_{\omega}\mathbb{A}(\omega,y)$ for a horizontal curve $y$. For reasons that will soon be apparent, we redefine\footnote{This potentially confusing notation will be used throughout without much further comment} $|~|_y$ to be $\prod_{x\in y}|~|_y^{1/2}$. 
\newline On a set $S$ of fibres and finitely many horizontal curves:
\[|~|_{S}=\prod_{y\in S}|~|_y,\]
where
\[|~|_y=\prod_{x\in y}|~|_{x,y}.\]
On a horizontal curve $y$, we redefine $\mathbb{A}(y)^{\times}$ to be a maximal subgroup such that the image of $|~|_y$ is equal to that of $|~|_y^2$.
\begin{example}\label{horizontal.example}
At a nonsingular $x\in y$, let $f_{x,y}=\text{char}(O_x)$. At an archimedean place $\omega$ of $y$, let $f_{\omega,y}(\alpha)=\text{char}(\mathcal{O}_{\omega,y})(\alpha)\text{exp}(\text{Tr}_{k(y)_{\omega}/\mathbb{R}}(1)|\text{res}_{t^0_{\omega}}(\alpha)|)$. Then
\[\int_{\mathbb{A}(y)^{\times}}f|~|_y^sd\mu_{\mathbb{A}(y)^{\times}}=\zeta(k,\tilde{f},|~|_k^{s/2}),\]
where $\zeta(k,g,\chi)$ is a classical Iwasawa--Tate zeta integral and
\[\tilde{f}=\otimes_v\tilde{f}_v\]
\[\tilde{f}_v(\alpha_v)=\begin{cases}
\text{char}(\mathcal{O}_v)(\alpha_v), & v\text{ archimedean} \\
e^{-\pi\alpha_v^2}, & v\text{ real} \\
e^{-2\pi|\alpha_v|^2}, & v\text{ complex}
\end{cases}\]
by the well-known theory of Iwasawa--Tate this integral defines a meromorphic function on $\mathbb{C}$ and satisfies a functional equation with respect to $s\mapsto2-s$.
\end{example}
\subsection{Zeta Integrals}\label{zetaintegral.subsubsection}
We are missing the factors at bad primes $\mathfrak{p}$. At a split ordinary double singularity on the fibre $\mathcal{S}_{\mathfrak{p}}$ we have two local branches, so that integrating over multiplicative group of the two-dimensional analytic adelic space for $\mathcal{S}_{\mathfrak{p}}$ gives us an additional factor that is not present in the zeta function. One way of treating singular and smooth fibres $y$ in a regular way is by integrating over $\mathbb{A}(y)^{\times}\times\mathbb{A}(y)^{\times}$, which we give the product measure.
\begin{example}\label{singular.example}
Let $y=\mathcal{S}_{\mathfrak{p}}$ be a fibre over $\mathfrak{p}$ with singular point $x$. Let $z$ be a branch of $y$ at $x$, then define $f_{x,y}$ on $O_{x,y}\times O_{x,y}$ as follows\footnote{This is the image of $\text{Char}(O_{x,y})$ under Fesenko's ``diamond operator''.}:
\[f_{x,y}=q_x^{-1}\text{char}(O_{x,z},t_{1,x,z}^{-1}O_{x,z}).\]
For nonsingular points $x\in y$ put
\[f_{x,y}=\text{char}(O_{x,y},O_{x,y}).\]
Combining, put $f_y=\otimes_{x\in y}f_{x,y}$, then
\[\int f_yd\mu_{\mathbb{A}(y)^{\times}\times\mathbb{A}(y)^{\times}}=A_{\mathfrak{p}}(\mathcal{S})^{(1-s)}\zeta(y,s)^2\prod_{z\in\mathcal{S}_{\mathfrak{p}}}q_z^{2(1-g_z)(1-s)}.\]
\end{example}
All that remains is to put everything together as an integral over the whole adelic space $\mathbb{A}(S)^{\times}$, where $S$ is contains all fibres of $\mathcal{S}$ and finitely many horizontal curves.
\begin{definition}\label{cnf.definition}
Combining the previous examples, let
\[f=\otimes_{y\in S}f_y,\]
where $y$ runs over all curves in $S$. $f_y$ is defined as follows:
\begin{enumerate}
\item Let $y$ be a nonsingular fibre and $x\in y$ be a closed point, put
\[f_{x,y}=\text{char}((O_{x,y},O_{x,y})),\]
\[f_y=\otimes f_{x,y}\]
\item Let $y$ be a fibre with singular point $x$. Choose branches $z,z'\in y(x)$ and put \[f_{x,y}=q_x^{-1}\text{char}(O_{x,z},t_{1,x,z}O_{x,z}).\]
\item Let $y$ be a (nonsingular) horizontal curve, for non-archimedean places of $k$ define $f_{x,y}$ as in point (1). At archimedean places $\omega$ take
    \[f_{\omega,y}(\alpha)=\text{char}(\mathcal{O}_{\omega,y})(\alpha)\text{exp}(\text{Tr}_{k(y)_{\omega}/\mathbb{R}}(1)|\text{res}_{t^0_{\omega}}(\alpha)|).\]
\end{enumerate}
\end{definition}
We introduce the following abbreviated notation:
\begin{definition}
Let $k$ be a number field and $\mathcal{P}=\mathbb{P}^1(\mathcal{O}_k)$. Let $S$ denote a set of curves on $\mathcal{S}$, consisting of all fibres and a finite set $S_{h}$ of horizontal curves on $\mathcal{S}$. If $f$ is an integrable function on $\mathbb{A}(\mathcal{S})^{\times}\times\mathbb{A}(\mathcal{S})^{\times}$ and $h$ is an integrable function on $\mathbb{A}(\mathcal{P})^{\times}\times\mathbb{A}(\mathcal{P})^{\times}$, then the zeta integral $\zeta^{(2)}_S(f,h,s)$ is defined to be the following product:
\[\prod_{\mathfrak{p}\in\text{Spec}\bigl(\mathcal{O}_k\bigr)}(\int_{\mathbb{A}(\mathcal{P}_{\mathfrak{p}})^{\times}\times\mathbb{A}(\mathcal{P}_{\mathfrak{p}})^{\times}}h_{\mathfrak{p}}|~|_{\mathcal{P}_{\mathfrak{p}}}^sd\mu_{\mathbb{A}(\mathcal{P}_{\mathfrak{p}})^{\times}})^{g-1}\int_{\mathbb{A}(\mathcal{S}_{\mathfrak{p}})^{\times}\times\mathbb{A}(\mathcal{S}_{\mathfrak{p}})^{\times}}f_{\mathcal{S}_{\mathfrak{p}}}|~|^s_{\mathcal{S}_{\mathfrak{p}}}d\mu_{\mathbb{A}(\mathcal{S}_{\mathcal{P}})^{\times}}\]
\[\times\prod_{y\in S_{h}}\int_{\mathbb{A}(y)^{\times}}f_y|~|^s_yd\mu_{\mathbb{A}(y)^{\times}}\times\bigl(\xi(\mathbb{P}^1(\mathcal{O}_k),s)\bigr)^{1-g}.\]
\end{definition}
\begin{remark}
This can be viewed as a ``renormalized'' integral over the adelic spaces $\mathbb{A}(\mathcal{S},S)^{\times}$ and $\mathbb{A}(\mathcal{P},S)^{\times}$ \cite[Part~57]{F3}. In the next section we will consider this as an integral over the analytic adeles of the non-connected arithmetic scheme
\[\mathcal{S}\coprod_{i=1}^{g-1}\mathcal{P}\]
\end{remark}
\begin{remark}
In this section we have only specified one integrable function, for a more complete theory see \cite[Section~1.3]{F3}. In general, integrable functions will only differ at finitely many components.
\end{remark}
At a closed point $x\in\mathcal{P}_{\mathfrak{p}}$, define $h_{x,\mathcal{P}_{\mathfrak{p}}}=\text{char}(O_{x,\mathcal{P}_{\mathfrak{p}}},O_{x,\mathcal{P}_{\mathfrak{p}}})$, and let $h_{\mathcal{P}_{\mathfrak{p}}}=\otimes_{x\in\mathcal{P}_{\mathfrak{p}}}h_{x,\mathcal{P}_{\mathfrak{p}}}$. Convergence of the preliminary zeta integral in some specified half plane will be a corollary (\ref{Convergence.corollary}) of the following computation.
\begin{theorem}\label{zetawithhorizontal.theorem}
Let $S$ be a set of curves consisting of all fibres and finitely many horizontal $y_i$, each of function field $k(y_i)$. If $f$ is as in definition~\ref{cnf.definition} and $h$ is as above, then
\[\zeta^{(2)}(f,h,s)=Q(s)^2\Gamma(\mathcal{S},s)^2A(\mathcal{S})^{(1-s)}\zeta(\mathcal{S},s)^2\prod_i\xi(k(y_i),s/2)^2,\]
where $Q(s)$ is a rational function such that
\[Q(s)=\pm Q(2-s),\]
and $\xi(k(y_i),s)$ is the completed Dedekind zeta function of the finite extension $k(y_i)/k$ which satisfies the functional equation:
\[\xi(k(y_i))\bigl(\frac{s}{2}\bigr)=\xi(k(y_i))\bigl(\frac{2-s}{2}\bigr).\]
\end{theorem}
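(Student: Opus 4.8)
The proof is a direct evaluation of the product defining $\zeta^{(2)}(f,h,s)$, carried out factor by factor by feeding the specific test functions of Definition~\ref{cnf.definition} into the local computations of Section~6 and then collecting terms; absolute convergence in a right half-plane falls out because the assembled expression has the shape of a Dedekind-type Euler product.

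First I would separate the defining product into three blocks: the local factors indexed by the finite primes $\mathfrak{p}$ of $\mathcal{O}_k$ (each a fibre integral over $\mathbb{A}(\mathcal{S}_{\mathfrak{p}})^{\times}\times\mathbb{A}(\mathcal{S}_{\mathfrak{p}})^{\times}$ weighted by the $(g-1)$st power of the corresponding $\mathbb{P}^1$-integral), the factors indexed by the horizontal curves $y_i$, and the single renormalising completion factor $\bigl(\xi(\mathbb{P}^1(\mathcal{O}_k),s)\bigr)^{1-g}$. For the non-archimedean block I split the primes according to whether $C$ has good or bad reduction. At a good prime, applying Example~\ref{nonsingular.example} to each of the two copies produces $\zeta(\mathcal{S}_{\mathfrak{p}},s)^{2}$ times a finite product of powers of the $q(x,\mathcal{S}_{\mathfrak{p}})$ governed by the conductor exponents $d(x,\mathcal{S}_{\mathfrak{p}})$; the $(g-1)$st power of the projective-line integral of Section~6.2 supplies the complementary powers, and Lemma~\ref{classicalformula.lemma} (the degree of the canonical divisor of the fibre) shows that these $q$-powers collapse to the clean factor recorded there, so the local contribution is $\zeta(\mathcal{S}_{\mathfrak{p}},s)^{2}$ times a power of $\zeta(\mathcal{P}_{\mathfrak{p}},s)$. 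At a bad prime I would instead push the diamond-operator test function of Definition~\ref{cnf.definition}(2) through Example~\ref{singular.example}: with $d(x,z)=d(x,z')=-1$ at each split ordinary double point, and using that for the surfaces considered here the conductor is obtained by counting such points (\cite{CDATNFOAS}), the nodes contribute exactly the local conductor factor $A_{\mathfrak{p}}(\mathcal{S})^{1-s}$, while the remaining $q$-powers again cancel against the $\mathbb{P}^1$-integral thanks to the genus relation $\sum_{z}(1-g_{z})-\#\{\text{nodes on }\mathcal{S}_{\mathfrak{p}}\}=1-g$ for the nodal fibre. Taking the product over all $\mathfrak{p}$, and using $\zeta(\mathcal{S},s)=\prod_{\mathfrak{p}}\zeta(\mathcal{S}_{\mathfrak{p}},s)$ (and likewise for $\mathbb{P}^1(\mathcal{O}_k)$), assembles the non-archimedean block as $\zeta(\mathcal{S},s)^{2}A(\mathcal{S})^{1-s}$ times a power of $\zeta(\mathbb{P}^1(\mathcal{O}_k),s)$.

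The horizontal block is evaluated by Example~\ref{horizontal.example}: with the doubled module convention on $|~|_{y_i}$ and the attendant redefinition of $\mathbb{A}(y_i)^{\times}$, the integral over $\mathbb{A}(y_i)^{\times}$ becomes an Iwasawa--Tate zeta integral for $k(y_i)$ at the argument $s/2$, and the archimedean test functions $f_{\omega,y}$ of Definition~\ref{cnf.definition}(3) are precisely those yielding the completed Dedekind zeta, so this factor is $\xi(k(y_i),s/2)^{2}$, the square reflecting the doubled module. It remains to interpret the completion factor. Since $\mathbb{P}^1(\mathcal{O}_k)$ has everywhere good reduction and $\zeta(\mathbb{P}^1(\mathcal{O}_k),s)=\zeta(k,s)\zeta(k,s-1)$, the $\zeta(\mathbb{P}^1(\mathcal{O}_k),s)$-part of $\bigl(\xi(\mathbb{P}^1(\mathcal{O}_k),s)\bigr)^{1-g}$ cancels the leftover power of $\zeta(\mathbb{P}^1(\mathcal{O}_k),s)$ produced by the non-archimedean block --- this is the whole point of the renormalisation --- while its gamma-part, by the identity of Section~6.3, contributes $\Gamma(\mathbb{P}^1(\mathcal{O}_k),s)^{1-g}=Q(s)\Gamma(\mathcal{S},s)$ with $Q(2-s)=\pm Q(s)$; over the two copies of $\mathbb{A}(\mathcal{P})^{\times}$ this becomes $Q(s)^{2}\Gamma(\mathcal{S},s)^{2}$, up to an $[s\mapsto 2-s]$-invariant constant which I absorb into $A(\mathcal{S})$ (equivalently into the discriminant normalisation of $Q$). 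Collecting the three blocks gives $Q(s)^{2}\Gamma(\mathcal{S},s)^{2}A(\mathcal{S})^{1-s}\zeta(\mathcal{S},s)^{2}\prod_i\xi(k(y_i),s/2)^{2}$, and the stated functional equations of $Q$ and of the $\xi(k(y_i),\cdot)$ are quoted from Section~6.3 and from Tate's thesis respectively.

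The main obstacle is the bad-prime bookkeeping. One must verify that the ad hoc diamond test function at a split ordinary double point really reconstitutes the correct local Euler factor of $\zeta(\mathcal{S}_{\mathfrak{p}},s)^{2}$ together with precisely the right power of the conductor --- that is, that the joint contribution of the two branches (each with $d(x,z)=-1$) simultaneously matches the singularity count entering $A(\mathcal{S})$ and the divergence cancelled by the renormalising $\mathbb{P}^1$-integral, with no spurious factor surviving. Once this local identity and the compatibility of the various $q$-power and $\zeta(\mathbb{P}^1(\mathcal{O}_k),s)$-power cancellations are pinned down, the remainder of the argument is purely organisational.
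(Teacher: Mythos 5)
Your proof takes essentially the same route as the paper's: the paper's proof is a one-line citation of Examples~\ref{nonsingular.example}, \ref{singular.example}, \ref{archimedean.example} and~\ref{horizontal.example}, and what you have done is supply exactly the factor-by-factor assembly that this citation is implicitly invoking (split the Euler product into good primes via Example~\ref{nonsingular.example}, bad primes via Example~\ref{singular.example}, horizontal curves via Examples~\ref{archimedean.example} and~\ref{horizontal.example}, and pair the $q$-power divergence against the $\mathbb{P}^1$-integrals). So in substance the two arguments coincide; you simply make explicit what the paper leaves to the reader.

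One point in your write-up deserves a flag, though it traces back to an ambiguity in the paper itself rather than to your argument. You say the gamma-part of the renormaliser $\bigl(\xi(\mathbb{P}^1(\mathcal{O}_k),s)\bigr)^{1-g}$ contributes $\Gamma(\mathbb{P}^1(\mathcal{O}_k),s)^{1-g}=Q(s)\Gamma(\mathcal{S},s)$ and that ``over the two copies of $\mathbb{A}(\mathcal{P})^{\times}$ this becomes $Q(s)^{2}\Gamma(\mathcal{S},s)^{2}$.'' As written in the definition of $\zeta^{(2)}(f,h,s)$, the renormalising factor is a single global multiplier $\xi(\mathbb{P}^1(\mathcal{O}_k),s)^{1-g}$; it is not itself squared by the passage to two copies, so its gamma-part and its $\zeta(\mathbb{P}^1(\mathcal{O}_k),s)$-part each appear to the power $1-g$, not $2(1-g)$. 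Meanwhile the two-copy integrals over $\mathbb{A}(\mathcal{P}_{\mathfrak{p}})^{\times}\times\mathbb{A}(\mathcal{P}_{\mathfrak{p}})^{\times}$ raised to the power $g-1$ accumulate $\zeta(\mathcal{P}_{\mathfrak{p}},s)^{2(g-1)}$ per fibre. For the stated formula with the doubled factors $Q(s)^2\Gamma(\mathcal{S},s)^2$ and no residual $\zeta(\mathbb{P}^1(\mathcal{O}_k),s)$, the exponent in the renormaliser has to be $2(1-g)$ (equivalently $n(1-g)$, not $n(1-g)/2$, in the general $n$-copy definition). Your proof silently assumes this consistent version, which is surely the intended one; it would strengthen your exposition to say explicitly that you are reading the renormaliser with exponent $n(1-g)$ so that both the $\zeta(\mathbb{P}^1(\mathcal{O}_k),s)$-powers and the gamma-powers close up.
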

\begin{proof}
This follows from combining examples~\ref{nonsingular.example},~\ref{singular.example},~\ref{archimedean.example} and~\ref{horizontal.example}.
\end{proof}
\begin{corollary}\label{Convergence.corollary}
Assume that the integral in the above definition is defined at $f,h$, then it converges for $s\in\{\Re(s)>2\}$.
\end{corollary}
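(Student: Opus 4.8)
The plan is to obtain the convergence directly from the explicit local factorisation that already underlies Theorem~\ref{zetawithhorizontal.theorem}, so this corollary is essentially a bookkeeping consequence of that computation. By the definition of $\zeta^{(2)}(f,h,s)$, it is a product of three kinds of factors: an infinite product over $\mathfrak{p}\in\Spec(\mathcal{O}_k)$ of the local pieces $I_{\mathfrak{p}}(s):=(\int_{\mathbb{A}(\mathcal{P}_{\mathfrak{p}})^{\times}\times\mathbb{A}(\mathcal{P}_{\mathfrak{p}})^{\times}}h_{\mathfrak{p}}|~|^s_{\mathcal{P}_{\mathfrak{p}}}d\mu^{\times})^{g-1}\int_{\mathbb{A}(\mathcal{S}_{\mathfrak{p}})^{\times}\times\mathbb{A}(\mathcal{S}_{\mathfrak{p}})^{\times}}f_{\mathcal{S}_{\mathfrak{p}}}|~|^s_{\mathcal{S}_{\mathfrak{p}}}d\mu^{\times}$; the finitely many horizontal integrals $\int_{\mathbb{A}(y_i)^{\times}}f_{y_i}|~|^s_{y_i}d\mu^{\times}$; and the single factor $\xi(\mathbb{P}^1(\mathcal{O}_k),s)^{1-g}$. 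Since the second and third groups contain only finitely many factors, the whole question reduces to the absolute convergence of $\prod_{\mathfrak{p}}I_{\mathfrak{p}}(s)$; and because each $I_{\mathfrak{p}}(s)$ is actually complex-valued (the $X$'s cancel in the multiplicative integrals, by the computations of Examples~\ref{nonsingular.example} and~\ref{singular.example}), ``converges'' here has its usual complex-analytic meaning.

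Next I would evaluate $I_{\mathfrak{p}}(s)$. At a good prime, Example~\ref{nonsingular.example} together with the $\mathbb{P}^1$-computation of Section~\ref{zetaintegral.section} shows that $I_{\mathfrak{p}}(s)$ is a fixed integer power of $\zeta(\mathcal{P}_{\mathfrak{p}},s)$ times $\zeta(\mathcal{S}_{\mathfrak{p}},s)^{2}$, \emph{with no surviving power of} $q_{\mathfrak{p}}^{1-s}$: the powers of $q_{\mathfrak{p}}^{1-s}$ produced by the additive/multiplicative curve measures on $\mathcal{S}_{\mathfrak{p}}$ and on $\mathcal{P}_{\mathfrak{p}}$ cancel exactly, which is precisely the raison d'être of the renormalising factor. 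At the finitely many bad primes, Example~\ref{singular.example} shows that $I_{\mathfrak{p}}(s)$ differs from such an expression only by the conductor contribution $A_{\mathfrak{p}}(\mathcal{S})^{1-s}$ and a fixed power of $q_{\mathfrak{p}}^{1-s}$, both entire and nowhere zero in $s$. Using $\zeta(\mathcal{S},s)=\prod_{\mathfrak{p}}\zeta(\mathcal{S}_{\mathfrak{p}},s)$ and $\zeta(\mathbb{P}^1(\mathcal{O}_k),s)=\zeta_k(s)\zeta_k(s-1)$, I would conclude that $\prod_{\mathfrak{p}}I_{\mathfrak{p}}(s)$ equals $\zeta(\mathcal{S},s)^{2}$ times a fixed power of $\zeta_k(s)\zeta_k(s-1)$, up to a finite product of factors holomorphic and nowhere zero on $\Re(s)>2$.

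It then remains to invoke the classical abscissa-of-convergence facts. The Euler product $\zeta(\mathcal{S},s)$ converges absolutely on $\Re(s)>\dim(\mathcal{S})=2$; the Euler products $\zeta_k(s)$ and $\zeta_k(s-1)$ converge absolutely and are nonvanishing on $\Re(s)>1$ and $\Re(s)>2$ respectively, so their common region of absolute convergence and nonvanishing is again $\Re(s)>2$, which makes raising their product to a (possibly negative) fixed integer power legitimate there. For the horizontal factors, Example~\ref{horizontal.example} identifies each $\int_{\mathbb{A}(y_i)^{\times}}f_{y_i}|~|^s_{y_i}d\mu^{\times}$ with a classical Iwasawa--Tate zeta integral for $k(y_i)$ in the variable $s/2$, absolutely convergent for $\Re(s/2)>1$, i.e.\ $\Re(s)>2$. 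Finally $\xi(\mathbb{P}^1(\mathcal{O}_k),s)^{1-g}$ is holomorphic and nowhere zero on $\Re(s)>2$, since its finite part is the convergent product $\zeta_k(s)\zeta_k(s-1)$ and its archimedean $\Gamma$-factors have neither zeros nor poles in that half-plane. Assembling the four ingredients, $\zeta^{(2)}(f,h,s)$ converges absolutely on $\{\Re(s)>2\}$, as claimed.

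The only non-routine point is the cancellation asserted in the second paragraph: one must verify that at a good prime the $q_{\mathfrak{p}}$-powers coming from the two curve measures cancel precisely, so that $\prod_{\mathfrak{p}}I_{\mathfrak{p}}(s)$ is a genuine product of shifted Dedekind Euler products rather than one carrying a divergent tail $\prod_{\mathfrak{p}}q_{\mathfrak{p}}^{c(1-s)}$ with $c\neq0$. This is exactly the phenomenon recorded in Section~\ref{zetaintegral.section} --- the $(g-1)$-fold product over the $\mathbb{P}^1$-integral absorbing the divergent part of the $\mathcal{S}$-integral --- so once it is cited the corollary follows, the remaining estimates being the standard half-plane-of-convergence statements for $\zeta(\mathcal{S},s)$, for $\zeta_k(s-1)$, and for an Iwasawa--Tate integral.
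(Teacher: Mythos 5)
Your proposal is correct and takes essentially the same route as the paper: both reduce convergence to the explicit evaluation from Theorem~\ref{zetawithhorizontal.theorem} and then invoke standard abscissa-of-convergence facts for $\zeta(\mathcal{S},s)$, $\zeta_k(s)\zeta_k(s-1)$, and the Iwasawa--Tate integrals in $s/2$; the paper compresses all of this into a single citation to the reference \cite{ZALF}, whereas you unpack the bookkeeping. The one thing you omit, which the paper does address briefly, is that the corollary is stated for \emph{arbitrary} admissible $f,h$ (not just the specific ones from Definition~\ref{cnf.definition}): the paper disposes of this by observing that any other admissible choice alters the computation at only finitely many factors, leaving the region of convergence unchanged, and your argument should end with the analogous remark.
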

\begin{proof}
In the case of $f,h$ as in the theorem, the convergence of the zeta integral follows from the well known properties of $\zeta(\mathcal{S},s)$ which are described in \cite{ZALF}. For arbitrary $f,g$ such that the zeta integral is defined, the calculation will differ by only finitely many factors.
\end{proof}
We will introduce the notation
\[\mathcal{Z}(\mathcal{S},\{y_i\},s)=\zeta(\mathcal{S},s)A(\mathcal{S})^{(1-s)/2}\Gamma(\mathcal{S},s)Q(s)\prod_i\xi(k(y_i),s/2).\]
Clearly, the zeta function verifies admits meromorphic continuation if and only if $\mathcal{Z}(\mathcal{S},\{y_i\},s)$ does, and the functional equations are equivalent.
\newline Let $\mathcal{T}$ be a two-dimensional arithmetic scheme over $\text{Spec}(\mathcal{O}_k)$. For $\mathfrak{p}\in\text{Spec}(\mathcal{O}_k)$, define $|~|^{(n)}_{\mathcal{T}_{\mathfrak{p}}}$ on $(\mathbb{A}(\mathcal{T}_{\mathfrak{p}})^{\times})^{\times n}$ by $|(a_1,\dots,a_n)|^{(n)}=|a_1|\dots|a_n|$. We will use the product measure on $(\mathbb{A}(\mathcal{T}_{\mathfrak{p}})^{\times})^{\times n}$. Let $f^{(n)}=(f,\dots,f)$ and $g^{(n)}=(g,\dots,g)$, and define $\zeta^{(n)}(\mathcal{S},f,h,s)$ as the following product:
\[\prod_{\mathfrak{p}\in\text{Spec}(\mathcal{O}_k)}\bigl(\int_{(\mathbb{A}(\mathcal{P}_{\mathfrak{p}})^{\times})^{\times n}}g^{(n)}_{\mathfrak{p}}(|~|_{\mathcal{P}_{\mathfrak{p}}}^{(n)})^sd\mu_{(\mathbb{A}(\mathcal{P}_{\mathfrak{p}})^{\times})^{\times n}}\bigr)^{g-1}\int_{(\mathbb{A}(\mathcal{S}_{\mathfrak{p}})^{\times})^{\times n}}f^{(n)}_{\mathcal{S}_{\mathfrak{p}}}(|~|^{(n)}_{\mathcal{S}_{\mathfrak{p}}})^{s}d\mu_{(\mathbb{A}(\mathcal{S}_{\mathcal{P}})^{\times})^{\times n}} \]
\[\times\prod_{y\in S_{h}}\int_{(\mathbb{A}(y)^{\times})^{\times n}}f^{(n)}_y(|~|^{(n)})^s_yd\mu_{(\mathbb{A}(y)^{\times})^{\times n}}\xi(\mathbb{P}^1(\mathcal{O}_k),s)^{n(1-g)/2}.\]
\begin{corollary}\label{firstcalculation.corollary}
For each positive integer $m$, we have
\[\zeta^{(2m)}(\mathcal{S},f,h,s)=\mathcal{Z}(\mathcal{S},\{y_i\},s)^{2m}.\]
\end{corollary}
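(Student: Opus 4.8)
The plan is to reduce the assertion to the case $m=1$, which is precisely Theorem~\ref{zetawithhorizontal.theorem} combined with the definition of $\mathcal{Z}(\mathcal{S},\{y_i\},s)$, using only the compatibility of the lifted integration theory of Section~\ref{analadele.subsection} with products of measure spaces. First I would place the definitions of $\zeta^{(n)}(\mathcal{S},f,h,s)$ and of $\zeta^{(2)}_S(f,h,s)$ side by side and observe that they are assembled, place by place, out of the \emph{same} atomic integrals --- the one over $\mathbb{A}(\mathcal{S}_{\mathfrak{p}})^{\times}\times\mathbb{A}(\mathcal{S}_{\mathfrak{p}})^{\times}$ against $f_{\mathcal{S}_{\mathfrak{p}}}|~|^{s}_{\mathcal{S}_{\mathfrak{p}}}$, its analogue over $\mathbb{A}(\mathcal{P}_{\mathfrak{p}})^{\times}\times\mathbb{A}(\mathcal{P}_{\mathfrak{p}})^{\times}$, and the one over $\mathbb{A}(y_i)^{\times}$ against $f_{y_i}|~|^{s}_{y_i}$ --- so that in particular $\zeta^{(2)}(\mathcal{S},f,h,s)$ coincides with the integral of Theorem~\ref{zetawithhorizontal.theorem}. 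Passing from $n=2$ to $n=2m$ then does exactly three things: it replaces each of these domains by its $m$-fold self-product; it replaces the integrands by the corresponding external tensor powers (this is the content of $f^{(2m)}=(f,\dots,f)$ and of $|(a_1,\dots,a_{2m})|^{(2m)}=|a_1|\cdots|a_{2m}|$, the latter being the external product of $m$ copies of $|~|^{(2)}$); and it multiplies the exponent $n(1-g)/2$ of the correction factor $\xi(\mathbb{P}^1(\mathcal{O}_k),s)$ by $m$. Since the measure on an $m$-fold self-product is by construction the $m$-fold product measure, the Fubini property for simply integrable functions --- which in this situation collapses to the identity $\mathrm{char}(A_1\times\cdots\times A_m)=\mathrm{char}(A_1)\cdots\mathrm{char}(A_m)$ together with multiplicativity of $\mu$ on products of measurable sets --- shows that each atomic integral, taken over its $m$-fold self-product against the $m$-fold external power of its integrand, equals the $m$-th power of that atomic integral.

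Assembling these identities factor by factor gives $\zeta^{(2m)}(\mathcal{S},f,h,s)=\zeta^{(2)}(\mathcal{S},f,h,s)^{m}$ as an equality of $\mathbb{C}$-valued functions on the half-plane $\Re(s)>2$, where by Corollary~\ref{Convergence.corollary} every integral converges absolutely, so that the Fubini interchanges and the rearrangement of the (infinite) product over $\mathfrak{p}$ are legitimate. Invoking Theorem~\ref{zetawithhorizontal.theorem} and substituting the definition of $\mathcal{Z}(\mathcal{S},\{y_i\},s)$ gives $\zeta^{(2)}(\mathcal{S},f,h,s)=\mathcal{Z}(\mathcal{S},\{y_i\},s)^{2}$; raising to the $m$-th power yields $\zeta^{(2m)}(\mathcal{S},f,h,s)=\mathcal{Z}(\mathcal{S},\{y_i\},s)^{2m}$ on $\Re(s)>2$, and hence as an identity of meromorphic functions on all of $\mathbb{C}$ by uniqueness of meromorphic continuation.

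The one step that genuinely requires attention --- and the only plausible, if modest, obstacle --- is the Fubini manipulation inside the $\mathbb{R}((X))$-valued theory: one must verify that the $m$-fold product of the lifted measures on $\mathbb{A}(\mathcal{S}_{\mathfrak{p}})^{\times}$ (and likewise on $\mathbb{A}(\mathcal{P}_{\mathfrak{p}})^{\times}$ and $\mathbb{A}(y_i)^{\times}$) is again a well-defined lifted measure and that it factorizes on the rings of measurable sets generated by the sets $t^{j}p^{-1}(L)$ underlying Example~\ref{adelictheory.example} and the normalizations of Section~\ref{zetaintegral.section}. What makes this harmless is that we only ever integrate simply integrable functions over domains that are honest Cartesian products and never perform a change of variables mixing the factors; the single known pathology of this measure --- the failure of Fubini for certain non-linear substitutions in the mixed-characteristic case, recorded in the second observation following Example~\ref{localtheory.example} --- therefore does not intervene, exactly as anticipated there. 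Granted this, the corollary is a purely formal consequence of Theorem~\ref{zetawithhorizontal.theorem}.
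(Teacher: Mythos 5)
Your proposal is correct and takes exactly the same route as the paper, whose proof is the one-line observation that the result ``follows from Theorem~\ref{zetawithhorizontal.theorem} and the definitions of measures above.'' You have simply unpacked what that means --- the product-measure factorization of the $m$-fold self-products and the scaling of the $\xi(\mathbb{P}^1(\mathcal{O}_k),s)^{n(1-g)/2}$ correction --- and your remark that the nonlinear-change-of-variables pathology of the $\mathbb{R}((X))$-valued Fubini theory never enters (since everything stays on Cartesian products of measurable sets) is the right thing to flag as the only point that needs any thought.
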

\begin{proof}
This follows from Theorem~\ref{zetawithhorizontal.theorem} and the definitions of measures above.
\end{proof}
When the genus of $C$ is $1$ and $2m=2$, we recover Fesenko's zeta integrals for elliptic curves and the formula in corollary~\ref{firstcalculation.corollary} agrees with his first calculation. This motivates the following definitions:
\begin{definition}
An simply integrable function on $(\mathbb{A}(\mathcal{S},S)^{\times})^{\times 2}$ is a finite linear combination of $\mu_{(\mathbb{A}(\mathcal{S},S)^{\times})^{\times 2}}$-measurable sets.
\end{definition}
\begin{definition}
Let $S$ be a set of curves on $\mathcal{S}$, for simply integrable functions \[f:(\mathbb{A}(\mathcal{S},S)^{\times})^{\times 2}\rightarrow\mathbb{C}\]
and $h$ on $(\mathbb{A}(\mathbb{P}^1(\mathcal{O}_k))^{\times})^{\times 2}$, the ``two-dimensional unramified zeta integral'' is
\[\zeta(\mathcal{S},S,f,h,|~|^s):=\zeta^{(2)}(S,f,h,s).\]
\end{definition}
We make the following conjecture, extending that of \cite[Section~4]{F3}:
\begin{conjecture}\label{ACFEZI.conjecture}
Provided the set $S$ of curves on $\mathcal{S}$ contains finitely many horizontal curves, the zeta integral $\zeta(\mathcal{S},S,f,h,|~|^s)$ meromorphically extends to the complex plane and satisfies the following functional equation
\[\zeta(\mathcal{S},S,f,h,|~|^s)=\zeta(\mathcal{S},S,f,h,|~|^{2-s}).\]
\end{conjecture}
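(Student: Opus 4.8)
The plan is to transport to two dimensions the mechanism by which Tate's thesis (Section~\ref{tate.section}) extracts the analytic continuation and functional equation of $\xi(k,s)$ from adelic duality. By Theorem~\ref{zetawithhorizontal.theorem} and Corollary~\ref{firstcalculation.corollary}, for the distinguished $f,h$ of Definition~\ref{cnf.definition} the integral $\zeta(\mathcal{S},S,f,h,|~|^{s})$ equals $\mathcal{Z}(\mathcal{S},\{y_i\},s)^{2}$, so that there the conjecture is exactly the expected meromorphy and $[s\mapsto 2-s]$-symmetry of $L(C,s)$ packaged together with those of the $\xi(k(y_i),\cdot)$; for a general simply integrable pair $f,h$ the integral differs from this only at finitely many components. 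Rather than appeal to known cases, the point is to prove the functional equation \emph{intrinsically}. I would first introduce the semi-global subring $\mathbb{B}(\mathcal{S})\hookrightarrow\mathbb{A}(\mathcal{S})$ (to be defined in Section~7), the two-dimensional analogue of $k\hookrightarrow\mathbb{A}_k$, and the filtration of $(\mathbb{A}(\mathcal{S})^{\times})^{\times 2}$ by the $\mathbb{R}((X))$-valued module $|~|_{\mathcal{S}}$; just as the idele-norm exact sequence $1\to\mathbb{A}^1_k/k^{\times}\to\mathbb{A}_k^{\times}/k^{\times}\to\mathbb{R}^{\times}_{+}\to1$ organizes the one-dimensional integral, one fibres the two-dimensional integral over the image of $|~|_{\mathcal{S}}$ and splits it into a ``large-module'' piece and its complement.

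Next I would check that the large-module piece is an \emph{entire} function $\eta_{f,h}(s)$: on that region the distinguished $f,h$ are supported on translates of the standard lattices $O\mathbb{A}(\mathcal{S}_{\mathfrak{p}})$, $O\mathbb{A}(\mathcal{P}_{\mathfrak{p}})$, the nonarchimedean local integrals collapse to geometric series in $q(\mathfrak{p})^{-s}$ and the archimedean ones to the convergent integrals of Example~\ref{archimedean.example}, so the estimates behind Corollary~\ref{Convergence.corollary} give absolute convergence and holomorphy on all of $\mathbb{C}$. The crux is the third step: two-dimensional adelic duality. The self-dual Haar measure on each $\mathbb{A}(k(y))$ fixed in Lemma~\ref{duality.lemma}, lifted through the chosen parameters $t_y$ to an $\mathbb{R}((X))$-valued Fourier transform on each $\mathbb{A}(y)$, assembles to a Fourier transform $f\mapsto\widehat f$ on $\mathbb{A}(\mathcal{S})$ and to a theta/Poisson summation formula --- the geometric incarnation of Riemann--Roch on $\mathcal{S}$ --- for functions summed over $\mathbb{B}(\mathcal{S})$. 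Applying it to the complementary piece yields the symmetric decomposition
\[\zeta^{(2)}(\mathcal{S},f,h,s)=\eta_{f,h}(s)+\eta_{\widehat f,\widehat h}(2-s)+\omega_{f,h}(s),\]
where $\omega_{f,h}(s)$, a priori defined only for $\Re(s)>2$ by Corollary~\ref{Convergence.corollary}, is --- exactly as in the elliptic case of \cite[Section~3]{F3} --- the Mellin transform along the image of $|~|_{\mathcal{S}}$ of a \emph{boundary function} $h_{f,h}(x)$, itself an iterated integral of $f$ and $\widehat f$ over the weak-topological boundary $\partial\mathbb{B}(\mathcal{S})^{\times}$ in the inductive limit of the weak topologies attached to the chosen family of characters.

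Granting the formal steps --- Fubini for the lifted measure (legitimate here since, by the observation after Example~\ref{localtheory.example}, no obstructive non-linear change of variable occurs) and convergence of each intermediate expression in a suitable half-plane --- Conjecture~\ref{ACFEZI.conjecture} becomes \emph{equivalent} to the single analytic assertion that $\omega_{f,h}(s)$ extends meromorphically to $\mathbb{C}$ with $\omega_{f,h}(s)=\omega_{\widehat f,\widehat h}(2-s)$; by Mellin inversion in the display above this is in turn equivalent to a growth and mean-periodicity property of the one-variable function $h_{f,h}(x)$. \textbf{This last point is the principal obstacle, and it is not a routine calculation.} In the one-dimensional theory the boundary $k\setminus k^{\times}=\{0\}$ is a single point and $h_f(x)$ is the elementary expression recorded in Section~\ref{tate.section}; here $\partial\mathbb{B}(\mathcal{S})^{\times}$ is genuinely two-dimensional, $h_{f,h}(x)$ is not elementary, and its mean-periodicity is precisely the content of the mean-periodicity correspondence to be formulated in Section~7.1, of a depth comparable to the conjectural automorphicity of $L(C,s)$. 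The honest target of the program is therefore the implication ``mean-periodicity of $h_{f,h}$ $\Rightarrow$ Conjecture~\ref{ACFEZI.conjecture}'', with the converse reading off the same decomposition; the adelic reinterpretation of the boundary integral carried out in the closing section is what is meant to make an eventual unconditional proof, via duality for $\mathbb{B}(\mathcal{S})$, within reach.
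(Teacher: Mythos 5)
The statement is labelled a conjecture, and the paper offers no proof: the remark immediately following it records that, for the distinguished $f,h$, the conjectured functional equation is \emph{equivalent} to $\Lambda(C,s)=\pm\Lambda(C,2-s)$, which is itself open. You have recognised this correctly and, rather than fabricating an argument, you have reconstructed the reduction strategy the paper pursues in Section~7: fibre the integral over the image of the module via $T(\mathcal{X})\cong\mathbb{R}^{\times}_{+}\times T_1(\mathcal{X})$ and split into an entire piece plus a boundary contribution (the Proposition in 7.2); introduce the semi-global ring $\mathbb{B}$ and its lifted discrete measure; pass the boundary term through the two-dimensional theta formula (Theorem~\ref{2DTheta.theorem}); and observe that the remaining obstruction is exactly the mean-periodicity of the boundary function $h(\mathcal{S},\{k(y_i)\},x)$, a condition of depth comparable to automorphicity. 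This matches the paper's Corollaries~\ref{adelicboundary.theorem} and~\ref{boundaryintegral.corollary} and the mean-periodicity conjecture in 7.1, and your flagging of the last step as the genuine obstacle is the correct reading.

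One small discrepancy worth noting: you write the decomposition as $\eta_{f,h}(s)+\eta_{\widehat f,\widehat h}(2-s)+\omega_{f,h}(s)$, with distinct entire pieces for $f$ and $\widehat f$ in the spirit of one-dimensional Tate theory, whereas the paper's Proposition in 7.2 (and Fesenko's elliptic case cited in Section~2) produces $\eta(s)+\eta(2-s)+\omega(s)$ with a \emph{single} entire function $\eta$. The paper achieves this by splitting $M=\mathbb{R}^{\times}_{+}$ into $M^{+}\cup M^{-}$ and inverting on $M^{-}$; the symmetry that in dimension one would require a separate $\eta_{\widehat f}$ is absorbed by the definition of $\omega_m(s)=\zeta_m(f,h,s)-|m|^{-2}\zeta_{m^{-1}}(f,h,s)$. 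Your version is not wrong for the chosen test functions (which are essentially self-dual at the relevant places), but you should align with the paper's single-$\eta$ decomposition to make the subsequent identification of $\omega$ with the boundary integral in Corollary~\ref{boundaryintegral.corollary} go through cleanly.
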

\begin{remark}
Let $S$ contain all fibres of $\mathcal{S}$ and finitely many horizontal curves, then
\begin{align*}
\zeta(\mathcal{S},S,f,h,|~|^s)=\zeta(\mathcal{S},S,f,h,|~|^{2-s})\Longleftrightarrow&\xi(\mathcal{S},s)^2=\xi(\mathcal{S},2-s)^2 \\
\Longleftrightarrow&\xi(\mathcal{S},s)=\pm\xi(\mathcal{S},2-s)\\
\Longleftrightarrow&\Lambda(C,s)=\pm\Lambda(C,2-s).
\end{align*}
\end{remark}
We have integrated over two copies of the multiplicative group of the ring of analytic adeles so as to get the correct factor of the zeta function at split ordinary double points. This is not the only motivation for doing so. In fact, there is a certain compatibility with two-dimensional class field theory that allows us to define ``twisted'' zeta integrals whose evaluation is an analogue of Hecke $L$-functions for arithmetic surfaces. This will be the subject of a later section. Before then, we will formulate the mean periodicity correspondence in terms of this ``two dimensional adelic analysis'' on $\mathcal{S}$.
\section{Adelic Duality and Filtrations}
In dimension two, there are three ``levels'' to the adeles - formal definitions will be given in 7.3. On the purely local level, one has the products of fields associated to closed points on irreducible curves. The other extreme is the global object, i.e. the function field of the surface. In between one has the local-global (or semi-global) complete discrete valuation fields associated to irreducible curves, or closed points on the surface. One may consider these levels as a filtration on the adeles, from which one constructs semi-cosimplicial complexes which compute the cohomology of quasi-coherent sheaves. The additive duality of the adeles and associated quotients can then be used to deduce the Riemann--Roch theorem, for example as in \cite{GAATRRFOCOS}. On the other hand, the duality of multiplicative, and $K$-theoretic, adelic structures give rise to results in class field theory \cite[Chapter~IX]{FV}. 
\newline It is our desire to apply additive and multiplicative adelic duality to the zeta integrals of the previous section. Shortly we will derive a harmono-analytic expression of adelic duality known as the ``two-dimensional theta formula.'' The analogous expression for elliptic surfaces was first proved by Fesenko in \cite[\S3.6]{F3}. This terminology is by analogy to the classical theta formula, expressing the functional equation of the theta function. This classical result can be verified through Poisson summation on the adeles and is used in the Iwasawa--Tate method for the functional equation of Hecke $L$-functions. 
\newline In order to achieve this, we must construct integrals on the local-global adelic spaces. The measures required do not factorize as a product of local factors, so the ad hoc method of renormalizing in the previous section is not sufficient. Instead, we will consider convergent integrals on certain non-connected arithmetic schemes.
\newline In 7.1 we will introduce the ``boundary functions'' associated to $\zeta(\mathcal{S},s)$, for arithmetic surfaces $\mathcal{S}$. They are defined in terms of inverse Mellin transforms of products of zeta functions and are a priori nothing to do with two-dimensional adeles. We will realise these functions as adelic integrals in 7.2. In 7.3, the two-dimensional theta formula will allow us to understand an important component of the boundary function as an adelic integral. Indeed, the terminology ``boundary'' comes from the fact that this component is an integral over a semi-global adelic boundary, with respect to a somewhat complicated topology. The meromorphic continuation, functional equation, and even poles, of the zeta integrals are all reduced to the analogous properties of this boundary integral.
\subsection{Boundary Functions and Mean-Periodicity}
The mean-periodicity correspondence provides a necessary and sufficient condition under which arithmetic zeta functions admit meromorphic continuation and the expected functional equation. Our aim in this subsection is to provide some intuition of this result. For simplicity we will work in the strong Schwartz space $\textbf{S}(\mathbb{R}^{\times}_{+})$ of functions $\mathbb{R}^{\times}_{+}$\footnote{Equally one could work with strong Schwartz functions on $\mathbb{R}$ by composing with the exponential map.}. This space is used in the theory of zeta functions elsewhere, for example \cite{OAROTICGRTPAZOLF}, and is even implicit as far back as \cite{FZED}. It's topological dual is the space of weak tempered distributions.
\begin{definition}
A strong Schwartz function is $\textbf{S}((\mathbb{R}^{\times}_{+}))$-mean-periodic if there is a non-trivial weak-tempered distribution $f^{\ast}$ such that
\[f\ast f^{\ast}=0.\]
Equivalently\footnote{This is equivalent as the Hahn-Banach theorem holds in $\textbf{S}((\mathbb{R}^{\times}_{+}))$.}, $f$ is $\textbf{S}((\mathbb{R}^{\times}_{+}))$-mean-periodic if
\[\overline{\text{Span}_{\mathbb{C}}\{y\cdot f:y\in\mathbb{R}^{\times}_{+}\}}\neq\textbf{S}((\mathbb{R}^{\times}_{+})),\]
where
\[y\cdot f(x)=f(x/y).\]
\end{definition} 
\begin{example}
One should think about this definition in comparison to smooth periodic functions on $\mathbb{R}$. Say such a function $f$ has period $p$, then it satisfies the following convolution equation
\[f\ast(\delta_p-\delta_0)=0,\]
where $\delta_a$ denotes the Dirac distribution at $a$. Evidently, $\mathbb{R}$ acts on $\mathscr{C}^{\infty}(\mathbb{R})$ by $y:f(x)\mapsto f(x-y)$, and the set Span$_{\mathbb{C}}\{y\cdot f:y\in\mathbb{R}\}$ is not dense in $\mathscr{C}^{\infty}(\mathbb{R})$. Unfortunately, it was explained in \cite{SRF} that the space of smooth functions is not suitable for applications to zeta functions, though, the space of functions of not more than exponential growth may be.
\end{example}
Let $\mathcal{S}\rightarrow\text{Spec}(\mathcal{O}_k)$ be a proper, regular model of a smooth projective curve over a number field $k$. For $i=0,\dots,n$, let $k_i$ denote a finite Galois extension of $k$. Define, for $c\gg0$, the inverse Mellin transform
\[f(\mathcal{S},\{k_i\}):\mathbb{R}^{\times}_{+}\rightarrow\mathbb{C}\]
\[f(\mathcal{S},\{k_i\},x)=\frac{1}{2\pi i}\int_{(c)}\mathcal{Z}(\mathcal{S},\{k_i\},s)x^{-s}ds\]
and
\[h(\mathcal{S},\{k_i\}):\mathbb{R}^{\times}_{+}\rightarrow\mathbb{C}\]
\[h(\mathcal{S},\{k_i\},x)=f(\mathcal{S},\{k_i\},x)-x^{-1}f(\mathcal{S},\{k_i\},x^{-1})\]
It follows from \cite[Theorem~5.18]{SRF} that $\zeta(\mathcal{S},s)$ admits meromorphic continuation to $\mathbb{C}$ and satisfies the functional equation (up to sign) if and only if there is an integer $n$ and field extensions $k_i/k$, $i=1,\dots,n$, such that $h(\mathcal{S},\{k_i\},x)$ is $\textbf{S}((\mathbb{R}^{\times}_{+}))$-mean-periodic. We are lead to the following conjecture.
\begin{conjecture}
There exists a finite set of extensions $\{k_i/k\}$ such that $h(\mathcal{S},\{k_i\},x)$ is $\mathfrak{X}$-mean-periodic, where $\mathcal{X}=\textbf{S}(\mathbb{R}^{\times}_{+})$-mean-periodic.
\end{conjecture}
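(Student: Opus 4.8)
Although this conjecture is, by \cite[Theorem~5.18]{SRF}, equivalent to the meromorphic continuation and functional equation of $\zeta(\mathcal{S},s)$ — hence of $L(C,s)$ — so that no part of its proof can be purely formal, the two-dimensional adelic setup suggests a definite line of attack: realise $h(\mathcal{S},\{k_i\},x)$ as a boundary term of the two-dimensional zeta integral and read off mean-periodicity from that description, thereby concentrating all of the analytic difficulty into a single convergence-and-regularity statement about an integral over a semi-global boundary. The first step is to decompose the renormalised zeta integral $\zeta^{(2)}(f,h,s)=\mathcal{Z}(\mathcal{S},\{y_i\},s)^2$ of Theorem~\ref{zetawithhorizontal.theorem} along the $\mathbb{R}((X))$-valued module map into the regions where the module is $\geq 1$, is $<1$, and a ``boundary'' contribution, exactly as in the Iwasawa--Tate splitting recalled in Section~\ref{tate.section}. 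The first two pieces converge for all $s$ and define entire functions; the two-dimensional theta formula — adelic Poisson summation on $\mathbb{A}(\mathcal{S})$ relative to the function field $K$ and the semi-global ring $\mathbb{B}(\mathcal{S})$, the higher analogue of $k\hookrightarrow\mathbb{A}_k$ — should interchange those two under $s\mapsto 2-s$ up to a factor invariant under $s\mapsto 2-s$, and identify the boundary contribution with the Mellin transform $\int_0^1 h(\mathcal{S},\{k_i\},x)\,x^s\,\frac{dx}{x}$. This recovers $h$ as an explicit adelic integral and reduces its mean-periodicity to the structure of that integral; it is the $g>1$ analogue of Fesenko's identity $A(\mathcal{E})^{1-s}\zeta(\mathcal{E},s)^2=\eta_{\mathcal{E}}(s)+\eta_{\mathcal{E}}(2-s)+\omega_{\mathcal{E}}(s)$, and Conjecture~\ref{ACFEZI.conjecture} is the parallel statement at the level of the zeta integrals themselves.

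Next I would make the boundary integral explicit. In analogy with the one-dimensional identity
\[h_f(x)=-\int_{\gamma\in\mathbb{A}^1_k/k^{\times}}\int_{\beta\in\partial k^{\times}}\bigl(f(x\gamma\beta)-x^{-1}\widehat{f}(x^{-1}\gamma\beta)\bigr)\,d\mu(\beta)\,d\mu(\gamma),\]
one expects $h(\mathcal{S},\{k_i\},x)$ to be a double integral over the analogue of $\mathbb{A}^1(\mathcal{S})/K^{\times}$ against the topological boundary $\partial\mathbb{B}(\mathcal{S})$, taken with respect to the inductive limit of weak topologies attached to the characters $\psi_{\mathfrak{p}},\psi_y$ of Lemma~\ref{duality.lemma}. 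The horizontal curves enter through their one-dimensional adele rings $\mathbb{A}(k(y_i))$, on which Tate's theory applies verbatim and for which the weak boundary degenerates to $\{0\}$ as in Section~\ref{tate.section}; the purpose of the factors $\xi(k(y_i),s/2)$, and of the freedom to choose the extensions $k_i$, is precisely to absorb those ``one-dimensional'' parts of the boundary, so that what remains is a boundary integral built only from the genuinely two-dimensional contributions of the fibres and of the archimedean two-dimensional local fields $K_{\omega,y}\cong\mathbb{R}((t)),\mathbb{C}((t))$.

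Finally — and this is the crux — I would try to deduce from this description that $h(\mathcal{S},\{k_i\},x)$ is $\textbf{S}(\mathbb{R}^{\times}_{+})$-mean-periodic, that is, to produce a nonzero weak tempered distribution $h^{\ast}$ with $h\ast h^{\ast}=0$, equivalently to show $\overline{\text{Span}_{\mathbb{C}}\{y\cdot h:y\in\mathbb{R}^{\times}_{+}\}}\neq\textbf{S}(\mathbb{R}^{\times}_{+})$. The hope is that, just as the one-dimensional weak boundary $\partial k^{\times}$ reduces to $\{0\}$, the boundary $\partial\mathbb{B}(\mathcal{S})$ is small enough that the boundary integral is visibly a well-behaved function of $x$ — smooth, of finite exponential order, with Mellin transform holomorphic off a discrete set and of moderate growth in vertical strips — after which the mean-periodicity criterion of \cite[Theorem~5.18]{SRF} supplies $h^{\ast}$ directly; breaking the boundary integral up over the fibres and the horizontal curves, and over archimedean versus non-archimedean places, should reduce this regularity claim to analysing contributions assembled from one-dimensional Iwasawa--Tate boundary data together with the archimedean two-dimensional local pieces. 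The main obstacle is exactly the establishment of that regularity: $\partial\mathbb{B}(\mathcal{S})$ is not locally compact and its weak topology — an inductive limit of weak topologies for a family of characters — is delicate, so even convergence of the boundary integral is not a formality, and the analytic control it requires is no weaker than the Hasse--Weil conjecture for $C$; this is the stage at which genuinely new input, beyond formal adelic duality, must be supplied. A secondary difficulty, logically prior, is to make the weak topology on $\mathbb{B}(\mathcal{S})$ precise and to verify that $\partial\mathbb{B}(\mathcal{S})$ is the object this program expects, without which the boundary integral cannot even be written down unambiguously.
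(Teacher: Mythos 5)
This is labelled a \emph{conjecture} in the paper, and no proof is offered there: the rest of Section~7 only develops the adelic framework (Sections 7.2 and 7.3) for studying it. You correctly recognise that any proof would be tantamount to the meromorphic continuation and functional equation of $L(C,s)$, and you therefore offer a program rather than a proof; that program is in fact the paper's own, so there is nothing to penalise, but also nothing proved.

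Concretely, your proposed steps correspond to the paper's subsequent results in order: the decomposition of $\zeta^{(2)}(f,h,s)$ into two entire pieces plus an $\omega$-term is the Proposition of Section~7.2 (the paper splits $M=\mathbb{R}^{\times}_{+}$ into $M^{\pm}$, not the $\mathbb{R}((X))$-valued module directly, but the effect is the same); the identification of the adelic boundary function $\mathfrak{h}$ with the inverse-Mellin boundary function $h$ of the mean-periodicity correspondence is Corollary~\ref{adelicboundary.theorem} (with the precise relation $\mathfrak{h}(\mathcal{S},\{y_i\},x)=x^{-1/2}h(\mathcal{S},\{k(y_i)\},x)$); the adelic Poisson-summation/theta input is Theorem~\ref{2DTheta.theorem}; and the final passage to a boundary integral over $\partial T_0$ is Corollary~\ref{boundaryintegral.corollary}. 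Your reading of the role of the horizontal curves and of the factors $\xi(k(y_i),s/2)$ is a plausible interpretation, though the paper does not phrase it as ``absorbing the one-dimensional parts of the boundary.'' The genuine gap you name in your last paragraph --- convergence and regularity of the boundary integral over the non-locally-compact $\partial T_0$ with its inductive-limit weak topology, and the production of a nontrivial annihilating distribution $h^{\ast}$ --- is exactly where the paper also stops; the paper presents its Section~7 results as ``the first step towards verification,'' not as a verification. So the judgement is: the proposal is a faithful re-derivation of the paper's framework, not a proof, and the obstruction you single out is the correct one.
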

The remainder of this section will focus on providing a two-dimensional adelic framework for studying this conjecture. More precisely, we will for obtain integral representations for the functions $h(\mathcal{S},\{k_i\},x)$ over semi-global adelic objects.
\begin{remark}
The mean-periodicity correspondence may be compared to automorphicity of the Hasse--Weil $L$-functions appearing in the motivic decomposition of the zeta function, for example \cite{TDAAACAROGL2}, \cite{MeAAMP}.
\end{remark}
\subsection{A Second Calculation of the Zeta Integral}
Be consistent between $\mathcal{A}$ and $\mathcal{S}$.
\newline As always, let $\mathcal{S}$ be a proper, regular model of a smooth, projective, geometrically connected curve $C$ over a number field $k$, and let $\mathcal{P}$ denote the relative projective line $\mathbb{P}^1(\mathcal{O}_k)$. Due to the renormalizing factors of the previous section, we are interested in the zeta function of the disjoint union
\[\mathcal{X}=\mathcal{S}\coprod_{i=1}^{g-1}\mathcal{P}.\]
Given any disjoint union $X=\bigcup X_i$ of schemes of finite type over $\mathbb{Z}$, one has
\[\zeta(X,s)=\prod\zeta(X_i,s),\]
therefore we have
\[\zeta(\mathcal{X},s)=\zeta(\mathcal{S},s)\zeta(\mathcal{P},s)^{g-1}.\]
Let $\mathscr{C}(\mathcal{S})$ denote a set of curves on $\mathcal{S}$, and $\mathscr{C}(\mathcal{P})$ denote a set of curves on $\mathcal{P}$. We will assume throughout that $\mathscr{C}(\mathcal{S})$ contains at least one horizontal curve, $\mathscr{C}(\mathcal{P})$ contains none, and each set includes all fibres. Let $\mathscr{C}(\mathcal{X})$ denote the union:
\[\mathscr{C}(\mathcal{X})=\mathscr{C}(\mathcal{S})\cup\mathscr{C}(\mathcal{P}).\]
We will define an analytic adelic space on $\mathcal{X}$ as the following product
\[\mathbb{A}(\mathcal{X},\mathscr{C}(\mathcal{X}))=\mathbb{A}(\mathcal{S},\mathscr{C}(\mathcal{S}))\times\overbrace{\mathbb{A}(\mathcal{P},\mathscr{C}(\mathcal{P}))\times\cdots\times\mathbb{A}(\mathcal{P},\mathscr{C}(\mathcal{P}))}^{g-1\text{ copies}}.\]
To avoid cumbersome notation, for an arithmetic surface $\mathcal{A}=\mathcal{S},\mathcal{P},\mathcal{X}$ and a set $\mathscr{C}(\mathcal{A})$ of curves on $\mathcal{A}$ we will use the notation
\[T(\mathcal{A},\mathscr{C}(\mathcal{A}))=(\mathbb{A}(\mathcal{A},\mathscr(\mathcal{A}))\times\mathbb{A}(\mathcal{A},\mathscr(\mathcal{A})))^{\times}.\]
Note that if $\mathscr{C}(\mathcal{X})$ contains only finitely many horizontal curves on $\mathcal{S}$ then the following integral converges for $s>1$:
\[\int_{T(\mathcal{X}),\mathscr{C}(\mathcal{X})}(f\coprod h)(\alpha)|\alpha|^sd\mu(\alpha),\]
where the measure on $T(\mathcal{X})$ is simply the product measure on the multiplicative adelic groups. Indeed, in the notation of the previous section, this is equal to the zeta integral
\[\zeta(\mathcal{S},\mathscr{C}(\mathcal{S}),f,h,|~|^s).\]
From now on, we will assume $\mathscr{C}(\mathcal{X})=\mathscr{C}(\mathcal{S})\cup\mathscr{C}(\mathcal{P})$ to be fixed, and simply use the notation $T(\mathcal{X})$ and 
\[\zeta(f,h,s):=\zeta(\mathcal{S},\mathscr{C}(\mathcal{S}),f,h,|~|^s).\]
Due to the presence of a horizontal curve in $\mathscr{C}(\mathcal{S})$, we have a surjective module on $T(\mathcal{X})$,
\[|~|:T(\mathcal{X})\rightarrow\mathbb{R}^{\times}_{+},\]
given as the product of modules on $\mathcal{S}$ and $\mathcal{P}$, which are modified at horizontal curves as in the previous section. $T_1(\mathcal{X})$ denotes the kernel of this module, namely
\[T_1(\mathcal{X})=\{x\in T(\mathcal{X}):|x|=1\}.\]
We may choose a splitting
\[T(\mathcal{X})\cong\mathbb{R}^{\times}_{+}\times T_1(\mathcal{X}).\]
The aim is to integrate over $T_1(\mathcal{X})$. In order to do so, we must first consider a finite subset $\mathscr{C}(\mathcal{X})^0\subset \mathscr{C}(\mathcal{X})$ containing at least one horizontal curve on $\mathcal{S}$. $\mathscr{C}(\mathcal{X})^0$ can be decomposed into a union
\[\mathscr{C}(\mathcal{S})^0\cup \mathscr{C}(\mathcal{P})^0,\]
where the first set contains only curves on $\mathcal{S}$ and the second only those on $\mathcal{P}$. For such an $\mathscr{C}(\mathcal{X})^0$, its complement will be denoted $\mathscr{C}(\mathcal{X})_0=\mathscr{C}(\mathcal{X})-\mathscr{C}(\mathcal{X})^0$. We define
\[T_{\mathscr{C}(\mathcal{X})^0}(\mathcal{X})=\prod_{y\in \mathscr{C}(\mathcal{S})^0}(\mathbb{A}(\mathcal{S},y)\times\mathbb{A}(\mathcal{S},y))^{\times}\prod_{y\in \mathscr{C}(\mathcal{P})^0}\prod_{i=1}^{g-1}(\mathbb{A}(\mathcal{P},y)\times\mathbb{A}(\mathcal{P},y))^{\times}.\]
Again, we have a surjective map
\[|~|_{\mathscr{C}(\mathcal{X})^0}:T_{\mathscr{C}(\mathcal{X})^0}(\mathcal{X})\rightarrow\mathbb{R}^{\times}_{+},\]
defined in the obvious manner. Its kernel is denoted
\[T_{\mathscr{C}(\mathcal{X})^0,1}(\mathcal{X}),\]
and we have a splitting
\[T_{\mathscr{C}(\mathcal{X})^0}(\mathcal{X})\cong\mathbb{R}^{\times}_{+}\times T_{\mathscr{C}(\mathcal{X})^0,1}(\mathcal{X}).\]
Let $p(\mathscr{C}(\mathcal{X})^0)$ denote the product of projections to one-dimensional adelic spaces as introduced in the previous section. We will fix a Haar measure on $p(T_{\mathscr{C}(\mathcal{X})^0,1}(\mathcal{X}))$ such that the Haar measure on $p(T_{\mathscr{C}(\mathcal{X})^0}(\mathcal{X}))$ is the product of this Haar measure and that on $\mathbb{R}^{\times}_{+}$, and let $\mu(T_{\mathscr{C}(\mathcal{X})^0,1})$ denote the lift of this Haar measure. For an integrable $\mathscr{F}$ on $T(\mathcal{X})$, for example $\mathscr{F}=f\coprod h$ defined by the functions in the previous section, let
\[\int_{T_1(\mathcal{X})}\mathscr{F}=\int_{T_{\mathscr{C}(\mathcal{X})^0}(\mathcal{X})}\int_{T_{\mathscr{C}(\mathcal{X})^0,1}(\mathcal{X})}\mathscr{F}(\alpha^0\gamma)d\mu(T_{S^0,1})d\mu(T_{\mathscr{C}(\mathcal{X})^0}),\]
where $\alpha^0\in T_{\mathscr{C}(\mathcal{X})^0}$ is such that
\[|\alpha^0|_{\mathscr{C}(\mathcal{X})^0}=|\alpha|^{-1}_{\mathscr{C}(\mathcal{X})^0}.\]
The integral does not depend on the choice of $\mathscr{C}(\mathcal{X})^0$, and we have the following lemma as a consequence of \cite[Lemma~43]{F3}.
\begin{lemma}
For an integrable function $\mathscr{F}$ on $T(\mathcal{X})$, we have the following
\[\int_{T(\mathcal{X})}\mathscr{F}=\int_{\mathbb{R}^{\times}_{+}}\int_{T_1(\mathcal{X})}\mathscr{F}(x\alpha)d\mu(\alpha)\frac{dx}{x}.\]
\end{lemma}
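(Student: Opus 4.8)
The statement to prove is a disintegration of the zeta integral over $T(\mathcal{X})$ along the surjective module $|~|:T(\mathcal{X})\to\mathbb{R}^{\times}_{+}$: namely
\[\int_{T(\mathcal{X})}\mathscr{F}=\int_{\mathbb{R}^{\times}_{+}}\int_{T_1(\mathcal{X})}\mathscr{F}(x\alpha)\,d\mu(\alpha)\,\frac{dx}{x}.\]
The plan is to reduce this to the one-dimensional statement already available from Fesenko's \cite[Lemma~43]{F3} (the analogous Fubini-type decomposition for the $p$-projected, one-dimensional adelic groups), by lifting it through the measure-definition $M_{\mathbb{A}(y)}(t_y^i p_y^{-1}(L))=X^i\mu_{\mathbb{A}(k(y))}(L)$ and its multiplicative analogue. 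First I would recall that, by the very definition of the integral over $T_1(\mathcal{X})$ given just above the lemma, we have already chosen a finite $\mathscr{C}(\mathcal{X})^0$ containing a horizontal curve, a splitting $T_{\mathscr{C}(\mathcal{X})^0}(\mathcal{X})\cong\mathbb{R}^{\times}_{+}\times T_{\mathscr{C}(\mathcal{X})^0,1}(\mathcal{X})$, and compatible Haar measures on the $p$-projections; so the content of the lemma is really that integrating against the lifted product measure on all of $T(\mathcal{X})$ may be performed by first integrating over $T_1(\mathcal{X})$ (equivalently, over the $\mathscr{C}(\mathcal{X})_0$-components at level $1$ together with $T_{\mathscr{C}(\mathcal{X})^0,1}$) and then over the scaling parameter $x$.

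The key steps, in order, are: (1) since $\mathscr{F}$ is simply integrable, it is a finite linear combination of characteristic functions of product-measurable sets, so by linearity it suffices to prove the identity for $\mathscr{F}=\mathrm{char}(B)$ with $B=\prod_y B_y$ a basic product set; (2) on each component $\mathbb{A}(y)^{\times}$ (or its square), push the basic set down through the residue/projection map $p_y$ to a set in the one-dimensional idelic group, where the lifted measure is by construction $X$-to-the-valuation times the chosen Haar measure, so the whole integral over $T(\mathcal{X})$ becomes an $\mathbb{R}((X))$-coefficient combination of one-dimensional adelic integrals over the idele groups of the function fields $k(y)$ and over the archimedean components; (3) apply \cite[Lemma~43]{F3} (the one-dimensional Fubini decomposition along the module map) on the $p$-projected groups, noting that the module $|~|$ on $T(\mathcal{X})$ factors through the product of the one-dimensional modules up to the factor-of-$\tfrac12$ renormalization at horizontal curves, which only rescales the logarithmic variable on $\mathbb{R}^{\times}_{+}$ and hence is absorbed into $dx/x$; (4) lift the resulting iterated integral back up through the $p_y$'s, checking that the lift of a product of Haar measures is the product of the lifts (this is immediate from the multiplicativity of $i\mapsto X^i$) and that the splitting chosen in the construction of $\int_{T_1(\mathcal{X})}$ is compatible with the splitting produced by Lemma~43; (5) invoke the already-asserted independence of $\int_{T_1(\mathcal{X})}$ from the choice of $\mathscr{C}(\mathcal{X})^0$ to conclude the stated identity without reference to the auxiliary finite set.

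The main obstacle I anticipate is step (4): verifying that the lifted measure genuinely disintegrates, i.e. that Fubini holds for the $\mathbb{R}((X))$-valued measure on the two-dimensional analytic adelic group. In the mixed-characteristic local setting this is exactly the point flagged in the paper (after Example~\ref{localtheory.example}, item~2, citing \cite{FTANLCOVOATDLF}) where Fubini can fail for non-linear changes of variables; here, however, the relevant change of variables is the \emph{linear} splitting off of the $\mathbb{R}^{\times}_{+}$-factor, which is exactly the benign case, so the obstruction should be navigable by restricting attention to simply integrable functions and product sets, where everything reduces termwise to the locally compact one-dimensional situation. A secondary technical point is bookkeeping: one must be careful that the renormalizing factors of Definition~\ref{renormalizedcurvemeasure.definition} (the $q(\mathcal{S}_{\mathfrak{p}})^{g-1}$ twists at smooth fibres) and the $\xi(\mathbb{P}^1(\mathcal{O}_k),s)^{1-g}$ completion factor, being independent of the adelic variable, pass through the disintegration untouched; since they are scalars in the coefficient ring they commute with both integrations, so they cause no difficulty beyond notation.
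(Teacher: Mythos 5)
Your proposal is correct and takes essentially the same route as the paper: the paper's entire justification is the single preceding sentence ``we have the following lemma as a consequence of \cite[Lemma~43]{F3},'' and your plan is precisely the expected unpacking of that citation---reduce to characteristic functions, push through the residue projections $p_y$ to the locally compact one-dimensional groups where Fesenko's Lemma~43 applies, and lift back via the defining relation $M_{\mathbb{A}(y)}(t_y^i p_y^{-1}(L))=X^i\mu_{\mathbb{A}(k(y))}(L)$. Your observations about the harmlessness of the factor-of-two renormalization at horizontal curves and the benign (linear) nature of the splitting, which sidesteps the Fubini pathologies of \cite{FTANLCOVOATDLF}, are exactly the technical points one would need to check and are handled sensibly.
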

In particular, we can decompose the zeta integrals of the previous section as
\[\zeta(f,h,s)=\int_{\mathbb{R}^{\times}_{+}}\zeta_x(f\coprod h,s)\frac{dx}{x}\cdot\xi(\mathcal{P},s)^{1-g},\]
where
\[\zeta_x(f\coprod h,s)=\int_{T_1(\mathcal{X})}(f\coprod h)(m_x\alpha)|m_x\alpha|^sd\mu(\alpha).\]
This decomposition is the key to our second calculation of the zeta integral.
\begin{proposition}
Let $f,h$ be as in the previous section, then we may decompose the zeta function as a sum of the form
\[\zeta(f,h,s)=\eta(s)+\eta(2-s)+\omega(s),\]
where $\eta(s)$ absolutely converges for all $s$, and so extends to an entire function on $\mathbb{C}$.
\end{proposition}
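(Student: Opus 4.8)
The strategy mirrors the one-dimensional argument sketched in Section~\ref{tate.section} (and Fesenko's elliptic-curve case), transported through the decomposition
\[\zeta(f,h,s)=\int_{\mathbb{R}^{\times}_{+}}\zeta_x(f\coprod h,s)\frac{dx}{x}\cdot\xi(\mathcal{P},s)^{1-g}\]
established just above. First I would split the outer integral over $\mathbb{R}^{\times}_{+}$ at $x=1$, writing
\[\zeta(f,h,s)=\Bigl(\int_{1}^{\infty}+\int_{0}^{1}\Bigr)\zeta_x(f\coprod h,s)\frac{dx}{x}\cdot\xi(\mathcal{P},s)^{1-g}.\]
The piece over $[1,\infty)$ converges absolutely for all $s$: on this range the module is bounded and the simply integrable function $f\coprod h$ (a finite combination of characteristic functions of the renormalized rank-two integral structures) decays because of the archimedean factors $\exp(\mathrm{Tr}_{k(y)_\omega/\mathbb{R}}(1)|\mathrm{res}_{t^0_\omega}(\alpha)|)$ on the horizontal curves. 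This integral, together with the entire factor coming from $\xi(\mathcal{P},s)^{1-g}$ after the $Q(s)$-renormalization (the poles of $\xi(\mathcal{P},s)^{1-g}$ being cancelled by zeros of $Q(s)$, as recorded in Section~6.3), gives one candidate for $\eta(s)$.

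Second, on the piece over $(0,1]$ I would invoke the two-dimensional theta/Poisson-summation formula — the analogue of the classical theta formula promised in Section~7 and proved for elliptic surfaces in \cite[\S3.6]{F3} — which exchanges the self-dual measure on $\mathbb{A}(k(y))$-structures with that on their duals and, combined with the module relation $|m_x|=x$, converts $\zeta_x(f\coprod h,s)$ into $\zeta_{x^{-1}}(\widehat{f\coprod h},2-s)$ plus explicit ``constant-term'' contributions arising from the discrepancy $f\coprod h - \widehat{f\coprod h}$ at the finitely many places where the function is not self-dual. Changing variables $x\mapsto x^{-1}$ turns the transformed main term into an integral over $[1,\infty)$ of the same shape as before, now in the variable $2-s$; this is the source of $\eta(2-s)$ (after symmetrizing the choice of $\eta$). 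The leftover constant-term contributions assemble into
\[\omega(s)=\int_{0}^{1}h_{f\coprod h}(x)x^{s}\frac{dx}{x},\]
where $h_{f\coprod h}$ is the boundary function — a Laplace transform of a rational function, exactly as $h_f(x)$ in the one-dimensional case — so $\omega(s)$ is a priori only defined for $\Re(s)>2$ but is manifestly of controlled analytic type.

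The main obstacle is the rigorous two-dimensional theta formula over the \emph{analytic} adeles $\mathbb{A}(\mathcal{X},\mathscr{C}(\mathcal{X}))$, including the bookkeeping at the finitely many ``bad'' components: the split ordinary double points (where $f_{x,y}=q_x^{-1}\mathrm{char}(O_{x,z},t_{1,x,z}O_{x,z})$ is Fesenko's diamond-operator image and is genuinely non-self-dual) and the archimedean horizontal components. One must check that the Fourier transform $\widehat{f\coprod h}$ is again simply integrable, that the duality pairing is compatible with the lifting maps $L_y^n$ and the renormalized measure of Definition~\ref{renormalizedcurvemeasure.definition}, and that the discrepancy terms are supported on a \emph{finite-dimensional} piece so that $h_{f\coprod h}(x)$ really is (the inverse Mellin transform of) a rational function in $x$; this last point is where the commensurability of conductors with $O_{x,\mathfrak{p}}$ from Lemma~\ref{duality.lemma} and the genus computation of Lemma~\ref{classicalformula.lemma} do the essential work. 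Once that is in hand, absolute convergence of $\eta(s)$ for all $s$ follows from the decay of $f\coprod h$ and $\widehat{f\coprod h}$ on $[1,\infty)$ exactly as in Tate's thesis, and the proposition follows.
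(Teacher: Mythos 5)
Your overall strategy — split the outer $\mathbb{R}^{\times}_{+}$-integral at $x=1$, take $\eta(s)$ from the large-$x$ piece, and transform the small-$x$ piece into an $\eta(2-s)$-contribution plus a remainder $\omega(s)$ — matches the paper's. What differs is the route through the small-$x$ piece. The paper's proof is a purely formal manipulation: decompose $M=\mathbb{R}^{\times}_{+}$ into $M^{\pm}$ at $|m|=1$, set $\eta(s)=\int_{M^{+}}\zeta_m(f,h,s)\,d\mu_{M^{+}}(m)$, \emph{define} $\omega_m(s)=\zeta_m(f,h,s)-|m|^{-2}\zeta_{m^{-1}}(f,h,s)$ so that $\omega(s)=\int_{M^{-}}\omega_m(s)\,d\mu_{M^{-}}(m)$, and then let the substitution $m\mapsto m^{-1}$ turn the leftover piece of the $M^{-}$-integral into $\eta(2-s)$. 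No Poisson summation or theta formula is invoked; the two-dimensional theta formula (Theorem~\ref{2DTheta.theorem}) and the summation formula underlying it appear only afterwards, in Section~7.3, where they are used to reinterpret $\omega$ as an integral over the weak boundary $\partial T_0$ — not to prove this proposition. By contrast, you front-load the theta/Poisson formula, self-duality of $f\coprod h$, the commensurability of conductors, and an analysis of the constant/boundary terms to manufacture $\omega(s)$ as a rational Laplace transform. That anticipates Section~7.3 and Corollary~\ref{boundaryintegral.corollary}, and is closer to the classical Tate's-thesis template, but deploys considerably more machinery than the proposition actually asserts (which says only that $\eta$ is entire, not that $\omega$ has rational boundary function). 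What your write-up does add — and the paper's terse proof omits — is a justification for the claim that $\eta(s)$ converges absolutely for all $s$: your observation that the archimedean horizontal-curve factors force decay on $[1,\infty)$ is precisely the point on which the entirety of $\eta$ rests, and the paper simply asserts it.
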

\begin{proof}
We decompose the multiplicative group $M=\mathbb{R}^{\times}_{+}$ of positive real numbers as $M=M^{+}\cup M^{-}$, where
\[M^{\pm}=\{m\in M:\pm(|m|-1)\geq0\}.\]
We give these spaces the measure
\[\mu_{M^{\pm}}=\begin{cases}
\mu_M & \text{on }M-M\cap T_1  \\
\frac{1}{2}\mu_M & \text{on }M\cap T_1.
\end{cases}\]
The result then follows directly from
\[\zeta(f,h,s)=\int_{M^{+}}\zeta_m(f,h,s)d\mu_{M^{+}}(m)+\int_{M^{-}}\zeta_m(f,h,s)d\mu_{M^{-}}(m),\]
and
\[\omega_m(s)=\zeta_m(f,h,s)-|m|^{-2}\zeta_{m^{-1}}(f,h,s),\]
by writing
\[\eta(s)=\int_{M^{+}}\zeta_m(f,s)d\mu_{M^{+}}(m),\]
\[\omega(s)=\int_{M^{-}}\omega_m(s)d\mu_{M^{-}}(m).\]
\end{proof}
Let $\{y_i\}$ denote the complete set of horizontal curves in $\mathscr{C}(\mathcal{S})$. We will define the adelic boundary function $\mathfrak{h}(\mathcal{S},\{y_i\},\cdot):\mathbb{R}^{\times}_{+}\rightarrow\mathbb{C}$ as follows
\[\mathfrak{h}(\mathcal{S},\{y_i\},x)=\int_{T_1(\mathcal{X})}(x^2f(m_x\gamma)-f(m_x^{-1}\gamma))d\mu(\gamma),\]
where $m_x\in M\subset T(\mathcal{X})$ is a choice of representative of $x\in\mathbb{R}^{\times}_{+}$.
\newline From the above proposition we deduce the following:
\begin{corollary}\label{adelicboundary.theorem}
Let $\mathfrak{f}(\mathcal{S},\{y_i\},x)$ be the inverse Mellin transform of $\mathcal{Z}(\mathcal{S},\{y_i\},s)$, then
\[\mathfrak{h}(\mathcal{S},\{y_i\},x)=\mathfrak{f}(\mathcal{S},\{y_i\},x)x^2-\mathfrak{f}(\mathcal{S},\{y_i\},x^{-1}).\]
In particular, by Mellin inversion, and the evaluation of the zeta integrals in the previous section,
\[\mathfrak{h}(\mathcal{S},\{y_i\},x)=x^{-1/2}h(\mathcal{S},\{k(y_i)\},x).\]
\end{corollary}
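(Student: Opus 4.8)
The plan is to reverse the computation behind the preceding Proposition: by construction $\mathfrak{h}(\mathcal{S},\{y_i\},\cdot)$ is essentially the inverse Mellin transform of the ``$\omega$-part'' of the zeta integral, so once that integral has been identified with an explicit meromorphic function via Theorem~\ref{zetawithhorizontal.theorem}, Mellin inversion delivers the closed form.

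First I would strip the $\mathbb{R}^{\times}_{+}$-direction off $\mathfrak{h}$. With the splitting $T(\mathcal{X})\cong\mathbb{R}^{\times}_{+}\times T_1(\mathcal{X})$ fixed, set $Z(x)=\int_{T_1(\mathcal{X})}(f\coprod h)(m_x\gamma)\,d\mu(\gamma)$; since $|m_x\gamma|=x$ for $\gamma\in T_1(\mathcal{X})$ and $m_x^{-1}$ represents $x^{-1}$, the definition of $\mathfrak{h}$ reads
\[\mathfrak{h}(\mathcal{S},\{y_i\},x)=x^{2}Z(x)-Z(x^{-1})=x^{2}\bigl(Z(x)-x^{-2}Z(x^{-1})\bigr).\]
The Fubini-type Lemma preceding the Proposition identifies $\int_{T(\mathcal{X})}(f\coprod h)(\alpha)|\alpha|^{s}\,d\mu(\alpha)$ with $\int_{0}^{\infty}Z(x)x^{s}\,\tfrac{dx}{x}$, so $Z$ is recovered by Mellin inversion along a vertical line in the region $\Re(s)>2$, which is legitimate by Corollary~\ref{Convergence.corollary}. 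Using the decomposition displayed just before Corollary~\ref{adelicboundary.theorem}, Theorem~\ref{zetawithhorizontal.theorem}, and the discussion of the renormalizing factor in \S6.3 --- where completing $\xi(\mathcal{P},s)^{1-g}$ is seen to supply precisely the transcendental part of $\Gamma(\mathcal{S},s)$ together with the $[s\mapsto 2-s]$-invariant factor $Q(s)$ --- the Mellin transform of $Z$ is identified with $\mathcal{Z}(\mathcal{S},\{y_i\},s)$. Hence $Z=\mathfrak{f}(\mathcal{S},\{y_i\},\cdot)$, and substituting into the displayed formula gives the first asserted identity.

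For the second (``in particular'') identity I would compare $\mathfrak{f}(\mathcal{S},\{y_i\},\cdot)$ with the function $f(\mathcal{S},\{k_i\},\cdot)$ of \S7.1: both are inverse Mellin transforms of $\mathcal{Z}(\mathcal{S},\{k(y_i)\},s)$, but the module carried by $T(\mathcal{X})$ is the one in which $|~|_y$ has been replaced by $\prod_{x\in y}|~|_y^{1/2}$ at horizontal curves --- the same ``square-rooting'' responsible for the horizontal factors appearing as $\xi(k(y_i),s/2)$ rather than $\xi(k(y_i),s)$. Tracking how $x\in\mathbb{R}^{\times}_{+}$ arises as a value of this module introduces exactly the factor $x^{-1/2}$; combining this with $h(\mathcal{S},\{k_i\},x)=f(\mathcal{S},\{k_i\},x)-x^{-1}f(\mathcal{S},\{k_i\},x^{-1})$ and the first identity yields $\mathfrak{h}(\mathcal{S},\{y_i\},x)=x^{-1/2}h(\mathcal{S},\{k(y_i)\},x)$.

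The real obstacle is not the Fubini-type interchanges between the adelic integral and the Mellin contour --- these are routine for simply integrable functions and absolutely convergent integrals --- but the normalization bookkeeping that pins down which power of $\mathcal{Z}$ and which power of $x$ actually survive: one must check that the renormalizing factor $\xi(\mathcal{P},s)^{1-g}$, the conductor power $A(\mathcal{S})^{(1-s)/2}$, the Hodge factor $\Gamma(\mathcal{S},s)$, the integration over two copies, and the half-power module at horizontal curves all conspire, across the $\mathbb{R}^{\times}_{+}\times T_1(\mathcal{X})$ splitting, to leave exactly the function $\mathcal{Z}(\mathcal{S},\{y_i\},s)$ of the statement together with the shift by $x^{-1/2}$, with no residual $s$- or $x$-dependent factor. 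Verifying this, place by place, is where essentially all the content of the corollary lies.
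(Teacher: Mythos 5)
The overall shape of your plan — Fubini-splitting $T(\mathcal{X})\cong\mathbb{R}^\times_+\times T_1(\mathcal{X})$, reading off $\mathfrak{h}(x)=x^2Z(x)-Z(x^{-1})$ from the definition, then pinning down $Z$ via Mellin inversion against the evaluated zeta integral — is the right skeleton, and the paper itself only says ``from the above proposition we deduce'' without spelling any of this out. But there is a concrete gap in the step you describe as ``the Mellin transform of $Z$ is identified with $\mathcal{Z}(\mathcal{S},\{y_i\},s)$.'' The Fubini lemma gives $\int_{T(\mathcal{X})}(f\coprod h)|\alpha|^s d\mu=\int_0^\infty Z(x)x^s\frac{dx}{x}$, as you say, but the decomposition displayed just before the corollary reads $\zeta(f,h,s)=\bigl(\int_0^\infty Z(x)x^s\frac{dx}{x}\bigr)\cdot\xi(\mathcal{P},s)^{1-g}$, and Corollary~\ref{firstcalculation.corollary} gives $\zeta(f,h,s)=\mathcal{Z}(\mathcal{S},\{y_i\},s)^2$. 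Putting these together, the Mellin transform of $Z$ is $\mathcal{Z}^2\,\xi(\mathcal{P},s)^{g-1}$, not $\mathcal{Z}$. Appealing to the \S6.3 discussion of how completing $\xi(\mathcal{P},s)^{1-g}$ produces $Q(s)\Gamma(\mathcal{S},s)$ does not close this: that discussion explains where $Q$ and $\Gamma(\mathcal{S},s)$ inside $\mathcal{Z}$ come from, it does not remove the extra $\mathcal{Z}\cdot\xi(\mathcal{P},s)^{g-1}$ factor. So the assertion $Z=\mathfrak{f}$, on which your first identity rests, is not justified as written; you must explain how a square becomes a first power and how the left-over $\xi(\mathcal{P},s)^{g-1}$ is disposed of, or else flag that the corollary as stated requires an implicit renormalization of $\mathfrak{h}$ that the paper does not make explicit.

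The derivation of the $x^{-1/2}$ in the second display is also too loose to count. If $\mathfrak{f}$ and the $f$ of \S7.1 are both the inverse Mellin transform of $\mathcal{Z}$ with the same convention, then $x^2\mathfrak{f}(x)-\mathfrak{f}(x^{-1})$ has Mellin transform $\mathcal{Z}(s+2)-\mathcal{Z}(-s)$ whereas $x^{-1/2}h(\mathcal{S},\{k_i\},x)$ has Mellin transform $\mathcal{Z}(s-\tfrac12)-\mathcal{Z}(\tfrac32-s)$, and these are not identically equal, so the two displayed identities cannot both hold without a concrete reinterpretation of the argument variable — most plausibly the rescaling of the module $|~|_y\mapsto\prod|~|_y^{1/2}$ at horizontal curves, which effects a substitution in $x$ rather than merely multiplying by $x^{-1/2}$. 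Your remark that the half-power module ``introduces exactly the factor $x^{-1/2}$'' is the right intuition to chase, but it is asserted, not shown; you would need to track precisely how the surjection $|~|:T(\mathcal{X})\to\mathbb{R}^\times_+$ relates to the variable $x$ feeding into $h(\mathcal{S},\{k_i\},\cdot)$ in \S7.1, and this is where the content (and the potential mismatch with the paper's own statement) actually sits.
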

In this way, we understand the boundary function $h$ of the mean-periodicity correspondence \cite{SRF} as an adelic integral. The next step is to understand the role of local-global adelic boundaries. 
\subsection{The Adelic Boundary Term}\label{adelicboundary.subsection}
Our current goal is to understand the boundary function as an integral over the topological boundary of an adelic subspace (thus motivating the terminology used throughout). This is the first step towards a verification of the mean-periodicity conjecture stated above through two-dimensional adelic duality. 
\newline Recall that, if $G$ is a topological group, then the weak topology is the weakest such that with respect to which every character of $G$ is continuous \cite[I,~2.3,~2.4]{GT}. Let $k$ be a number field and $f\in S(\mathbb{A}_k)$ be an adelic Schwartz function. In section 2 we saw the expression
\[h_f(x):=-\int_{\gamma\in\mathbb{A}^1_k/k^{\times}}\int_{\beta\in\partial k^{\times}}(f(x\gamma\beta)-x^{-1}\widehat{f}(x^{-1}\gamma\beta))d\mu(\beta)d\mu(\gamma),\]
where the boundary of $k^{\times}\hookrightarrow\mathbb{A}_k^{\times}$ is with respect to the weak topology on the locally compact topological group $\mathbb{A}_k^{\times}$. Explicitly, this is the following rational function:
\[h_f(x)=-\mu(\mathbb{A}_k^1/k^{\times})(f(0)-x^{-1}\widehat{f}(0)).\]
We will need to use a two-dimensional analogue of the inclusion $k^{\times}\hookrightarrow\mathbb{A}_k^{\times}$. 
\newline Let $\mathcal{A}$ be an arithmetic surface (in practice, it will be $\mathcal{S}$ or $\mathcal{X}=\mathcal{S}\coprod_{i=1}^{g-1}\mathcal{P}$) with function field $K$. If $y$ is a curve on $\mathcal{A}$, the field $K_y=\text{Frac}(\widehat{\mathcal{O}_y})$ is a complete discrete valuation field whose residue field is the global field $k(y)$. It is therefore neither truly local, nor truly global in nature. For all closed points $x\in y$, we have an embedding
\[K_y\hookrightarrow K_{x,y},\]
which together induce a diagonal embedding
\[K_y\hookrightarrow\prod_{x\in y}K_{x,y}.\]
For a curve $y$ on $\mathcal{A}$, let $\mathbb{B}(\mathcal{A},y)$ denote the intersection of the image of this embedding with $\mathbb{A}(\mathcal{A},y)$. Informally speaking, the counting measure on $k(y)$ lifts to an $\mathbb{R}((X))$-valued measure on $\mathbb{B}(\mathcal{A},y)$. We now make this more precise.
\begin{definition}
A basic measurable set on $\mathbb{B}(y)$ is a set of the form $t_y^{i}p_y^{-1}(A)$, where $A\subset k(y)$ is measurable with respect to the discrete counting measure\footnote{Such a set is a finite set of points.} $\mu_{k(y)}$ on $k(y)$, and $i\in\mathbb{Z}$. The measure $\mu_{\mathbb{B}(y)}$ of $t_y^{i}p_y^{-1}(A)$ is $X^i\mu_{k(y)}(A)\in\mathbb{R}((X))$.
\end{definition}
This measure can be extended by linearity. An infinite product of these measures is divergent unless, for all but finitely many $y$, each $A\subset k(y)$ contains precisely one point. Firstly we will consider finite products.
\newline Let $\mathscr{C}(\mathcal{A})^0$ be a finite set of curves on $\mathcal{A}$, and define \[\mathbb{B}(\mathcal{A},S^0)=\bigg(\prod_{y\in \mathscr{C}(\mathcal{A})^0}\mathbb{B}(\mathcal{A},y)\bigg)\cap\mathbb{A}(\mathcal{A},\mathscr{C}(\mathcal{A})^0).\]
We will integrate on $\mathbb{B}(\mathcal{A},\mathscr{C}(\mathcal{A})^0)$ with a measure induced from the product of the lifted counting measures on each $\mathbb{B}(\mathcal{A},y)$, for $y\in\mathscr{C}(\mathcal{A})^0$. Explicitly, we make the following definition: 
\begin{definition}
Let $\mathcal{A}$ be an arithmetic surface and let $\mathscr{C}(\mathcal{A})^0$ denote a finite set of curves on $\mathcal{A}$ we define the measure $\mu_{\mathbb{B}(\mathscr{C}(\mathcal{A})^0)}$ by
\[\mu_{\mathbb{B}(\mathscr{C}(\mathcal{A})^0}=\otimes_{y\in\mathscr{C}(\mathcal{A})^0}\mu_{\mathbb{B}(y)}.\]
We define the measure on $\mathbb{B}(\mathcal{A},\mathscr{C}(\mathcal{A})^0)\times\mathbb{B}(\mathcal{A},\mathscr{C}(\mathcal{A})^0)$ to be the product measure.
\end{definition}
Let $F$ be a two-dimensional local field, and let $\psi$ be a choice of character such that all continuous characters of $F$ are of the form
\[\psi_a:\alpha\mapsto\psi(a\alpha),\]
for $a\in F$. For an integrable function $f$ on $F$, the Fourier transform $\mathcal{F}(f)$ with respect to $\psi$ is defined by
\[\mathcal{F}(f)(\beta)=\int_Ff(\alpha)\psi(\alpha\beta)d\alpha.\]
In particular, this applies to fields of the form $K_{x,y}$ and we denote the Fourier transform on these fields by $\mathcal{F}_{x,y}$. For any integrable function $f_y$ on $\mathbb{A}(\mathcal{A},y)$, we may define
\[\mathcal{F}_y(f_y)=\otimes_{x\in y}\mathcal{F}_{x,y}(f_{x,y}).\]
By \cite[Proposition~32]{F3}, we have a ``summation formula''\footnote{The semi-global adelic object $\mathbb{B}$ is discrete in $\mathbb{A}$.}:
\begin{proposition}
Let $\mathscr{C}(\mathcal{A})^0$ be a finite set of curves on an arithmetic surface $\mathcal{A}$, and $f$ be an integrable function on $\mathbb{B}(\mathcal{A},\mathscr{C}(\mathcal{A})^0)\times\mathbb{B}(\mathcal{A},\mathscr{C}(\mathcal{A})^0)$ then, for all $\alpha\in\mathbb{A}(\mathcal{A},\mathscr{C}(\mathcal{A})^0)\times\mathbb{A}(\mathcal{A},\mathscr{C}(\mathcal{A})^0)$,
\[\int f(\alpha\beta)d\mu_{\mathbb{B}(\mathcal{A},\mathscr{C}(\mathcal{A})^0)\times\mathbb{B}(\mathcal{A},\mathscr{C}(\mathcal{A})^0)}(\beta)=\frac{1}{|\alpha|}\int\mathcal{F}(f)(\alpha^{-1}\beta)d\mu_{\mathbb{B}(\mathcal{A},\mathscr{C}(\mathcal{A})^0)\times\mathbb{B}(\mathcal{A},\mathscr{C}(\mathcal{A})^0)}(\beta).\]
\end{proposition}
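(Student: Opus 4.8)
The plan is to deduce the formula from the classical adelic Poisson summation (Riemann--Roch) identity on the global fields $k(y)$, $y \in \mathscr{C}(\mathcal{A})^0$, by exploiting the fact that both $\mathbb{B}(\mathcal{A},y)$ and its measure are \emph{lifts}, in the sense of Examples~\ref{localtheory.example} and~\ref{adelictheory.example}, of $k(y) \subset \mathbb{A}(k(y))$ and the counting measure on it. First I would note that everything in the statement is multiplicative over the finite index set $\mathscr{C}(\mathcal{A})^0$ and over the two copies of $\mathbb{B}$: the measure $\mu_{\mathbb{B}(\mathcal{A},\mathscr{C}(\mathcal{A})^0) \times \mathbb{B}(\mathcal{A},\mathscr{C}(\mathcal{A})^0)}$ is by definition $\otimes_y \mu_{\mathbb{B}(y)}^{\otimes 2}$, the module $|\cdot|$ is a product of the $|\cdot|_y$, and $\mathcal{F} = \otimes_y \mathcal{F}_y = \otimes_y \otimes_{x \in y} \mathcal{F}_{x,y}$. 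Since $\mathscr{C}(\mathcal{A})^0$ is finite and $f$ is simply integrable, the Fubini property for the lifted measure reduces the claim to the single-curve, single-copy assertion: for a simply integrable $f_y$ on $\mathbb{B}(\mathcal{A},y)$ and $\alpha \in \mathbb{A}(\mathcal{A},y)$, one has $\int f_y(\alpha\beta)\, d\mu_{\mathbb{B}(y)}(\beta) = |\alpha|_y^{-1} \int \mathcal{F}_y(f_y)(\alpha^{-1}\beta)\, d\mu_{\mathbb{B}(y)}(\beta)$.

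For that single-curve statement I would use the non-canonical isomorphism $\mathbb{A}(\mathcal{A},y) \cong \mathbb{A}(k(y))((t_y))$ of Example~\ref{adelictheory.example}, under which $\mathbb{B}(\mathcal{A},y)$ becomes $k(y)((t_y))$ with $k(y)$ discrete and cocompact in $\mathbb{A}(k(y))$. Expanding $\beta = \sum_i \beta_i t_y^i$ with $\beta_i \in k(y)$ and $\alpha = \sum_{j \geq j_0} \alpha_j t_y^j$ with $\alpha_j \in \mathbb{A}(k(y))$, $\alpha_{j_0} \in \mathbb{A}(k(y))^\times$, so that $|\alpha|_y = \|\alpha_{j_0}\|_{\mathbb{A}(k(y))}\, X^{j_0}$, and using that $\mu_{\mathbb{B}(y)}$ assigns $X^i$ times the counting measure to the $t_y^i$-layer, the integral $\int f_y(\alpha\beta)\, d\mu_{\mathbb{B}(y)}$ unwinds coefficient-by-coefficient in $X$ into a locally finite combination of sums $\sum_{a \in k(y)} \phi(\alpha' a)$ with $\phi$ finitely supported on $\mathbb{A}(k(y))$ and $\alpha' \in \mathbb{A}(k(y))^\times$ assembled from the $\alpha_j$. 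To each such sum I would apply the classical identity $\sum_{a \in k(y)} \phi(\alpha' a) = \|\alpha'\|^{-1} \sum_{a \in k(y)} \widehat{\phi}((\alpha')^{-1}a)$, with $\widehat{\phi}$ the Fourier transform for the character $\varphi_y$ of Lemma~\ref{duality.lemma}; reassembling the $X$-layers and matching the $X^{-j_0}$ arising from the $t_y$-valuation against the $X^{-j_0}$ in $|\alpha|_y^{-1}$ then yields the desired right-hand side.

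\textbf{Main obstacle.} The one genuinely non-formal point is the compatibility of the two Fourier transforms and the normalizing constants: one must check that $\mathcal{F}_{x,y}$, built from $\psi_{x,y}$, lifts layer-by-layer the one-dimensional Fourier transform on the completion $k(y)_x$ attached to $\varphi_y$, which is exactly guaranteed by the relation $\psi_{\mathfrak{p}} = \varphi_{\mathfrak{p}} \circ p_{\mathfrak{p}}$ (and its horizontal analogue) of Lemma~\ref{duality.lemma}, and that the self-dual normalization of $\mu_{\mathbb{A}(k(y))}$ together with Lemma~\ref{classicalformula.lemma} makes the constants in the lifted identity match with no spurious factor. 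This is precisely \cite[Proposition~32]{F3} in the elliptic case. The argument being local in $y$, it carries over to the present possibly singular setting without change: $\mathbb{B}(\mathcal{A},y)$ depends only on the residue field $k(y)$ of $K_y$, irrespective of $g_y$ or of the split ordinary double points, which affect only the factorization $K_{x,y} = \prod_{z \in y(x)} K_{x,z}$ and hence only the bookkeeping $\mathcal{F}_{x,y} = \otimes_{z \in y(x)} \mathcal{F}_{x,z}$. The remaining point---legitimacy of term-by-term summation over $X$-layers---is routine, since simple integrability of $f$ means only finitely many layers contribute and each contributes a finite sum.
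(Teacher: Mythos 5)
The paper provides no proof of this proposition: it is stated as an immediate consequence of \cite[Proposition~32]{F3}, with the footnote that $\mathbb{B}$ is discrete in $\mathbb{A}$. Your sketch — reduce by multiplicativity and Fubini to a single curve $y$, pass to the non-canonical splitting $\mathbb{A}(y)\cong\mathbb{A}(k(y))((t_y))$ under which $\mathbb{B}(y)$ becomes the lift of the discrete cocompact $k(y)\subset\mathbb{A}(k(y))$, apply classical adelic Poisson summation layer-by-layer in $t_y$, and match constants and transforms via $\psi_{\mathfrak{p}}=\varphi_{\mathfrak{p}}\circ p_{\mathfrak{p}}$ from Lemma~\ref{duality.lemma} together with the self-dual normalization of $\mu_{\mathbb{A}(k(y))}$ — is a correct reconstruction of the argument the paper delegates to Fesenko, and it transplants to the possibly singular setting exactly as you describe.
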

We want a multiplicative analogue of this statement. Let $y$ be a curve on $\mathcal{A}$, we introduce the notation \[T_0(\mathcal{A},y)=\mathbb{B}(\mathcal{A},y)^{\times}\times\mathbb{B}(\mathcal{A},y)^{\times}\subset T(\mathcal{A},y).\]
To integrate on this space we make the following definitions. 
\begin{definition}
Let $y$ be a curve on an arithmetic surface $\mathcal{A}$, and let $\mu_{k(y)^{\times}}$ denote the discrete counting measure on the multiplicative group $k(y)^{\times}$ of the function field $k(y)$. A basic measurable $\mu_{\mathbb{B}(\mathcal{A},y)^{\times}}$ on $\mathbb{B}(\mathcal{A},y)^{\times}$ is of the form $t_y^ip_y^{-1}(A)$, where $A$ is a $\mu_{k(y)^{\times}}$-measurable set of $k(y)^{\times}$, and its measure is $X^i\mu_{k(y)}(A)\in\mathbb{R}((X))$. We define
\[\mu_{T_0(\mathcal{A},y)}=\mu_{\mathbb{B}(\mathcal{A},y)^{\times}}\otimes\mu_{\mathbb{B}(\mathcal{A},y)^{\times}}\]
These measure can be extended to finite unions by linearity.
\end{definition}
For a subset $\mathscr{C}(\mathcal{A})^0$ of finitely many curves on $\mathcal{A}$, let
\[T_0(\mathcal{A},\mathscr{C}(\mathcal{A})^0)=\prod_{y\in\mathscr{C}(\mathcal{A})^0}T_0(\mathcal{A},y)\subset T(\mathcal{A},\mathscr{C}(\mathcal{A})^0).\]
On $T_0(\mathcal{A},\mathscr{C}(\mathcal{A})^0)$, we introduce the measure
\[\mu_{T_0(\mathcal{A},\mathscr{C}(\mathcal{A})^0)}=\prod_{y\in S^0}(q_y-1)^{-2}\mu_{T_0(\mathcal{A},y)}.\]
Finally, let $\mathscr{C}(\mathcal{A})$ be a set containing all fibres and finitely many horizontal curves, we define
\[T_0(\mathcal{A},\mathscr{C}(\mathcal{A}))=\prod_{y\in\mathscr{C}(\mathcal{A})}T_0(\mathcal{A},y).\]
On this space, we integrate using the following rule
\[\int_{T_0(\mathcal{A},\mathscr{C}(\mathcal{A}))}\mathscr{F}=\lim_{\mathscr{C}(\mathcal{A})^0\subset\mathscr{C}(\mathcal{A})}\int_{T_0(\mathcal{A},\mathscr{C}(\mathcal{A})^0)}\mathscr{F},\]
Integrable functions are those such that this limit is finite. Overall, we have a filtration
\[T_0(\mathcal{A},S)\subset T_1(\mathcal{A},S)\subset T(\mathcal{A},S).\]
In \cite[Section~3.5]{F3}, Fesenko introduces a measure on the quotient such that, for an integrable function $g$ on $T(\mathcal{A},\mathscr{C}(\mathcal{A}))$:
\[\int_{T_1(\mathcal{A},\mathscr{C}(\mathcal{A}))}g=\int_{T_1(\mathcal{A},\mathscr{C}(\mathcal{A}))/T_0(\mathcal{A},\mathscr{C}(\mathcal{A}))}\int_{T_0(\mathcal{A},\mathscr{C}(\mathcal{A}))}g(\gamma\beta)d\mu(\beta) d\mu(\gamma)\]
Let $\mathscr{C}(\mathcal{A})^0\subset\mathscr{C}(\mathcal{A})$ be a finite subset. We endow $\mathbb{A}(\mathcal{A},\mathscr{C}(\mathcal{A})^0)\times\mathbb{A}(\mathcal{A},\mathscr{C}(\mathcal{A})^0)$ with the weakest topology such that each character lifted by $p$ is continuous. With respect to this topology, we will call the boundary $\partial T_0(\mathcal{A},\mathscr{C}(\mathcal{A})^0)$ of $T_0(\mathcal{A},\mathscr{C}(\mathcal{A})^0)\subset\mathbb{A}(\mathcal{A},\mathscr{C}(\mathcal{A})^0)\times\mathbb{A}(\mathcal{A},\mathscr{C}(\mathcal{A})^0)$ the ``weak boundary''. Note that this is a measurable subset of $\mathbb{B}(\mathcal{A},\mathscr{C}(\mathcal{A})^0)\times\mathbb{B}(\mathcal{A},\mathscr{C}(\mathcal{A})^0)$, and we may define
\[\int_{\partial T_0(\mathcal{A},\mathscr{C}(\mathcal{A})^0)}g=d(\mathscr{C}(\mathcal{A})^0)\int_{\mathbb{B}(\mathcal{A},\mathscr{C}(\mathcal{A})^0)\times\mathbb{B}(\mathcal{A},\mathscr{C}(\mathcal{A})^0}g\cdot\text{char}_{\partial T_0(\mathcal{A},\mathscr{C}(\mathcal{A})^0)}d\mu_{\mathbb{B}(\mathcal{A},\mathscr{C}(\mathcal{A})^0)\times\mathbb{B}(\mathcal{A},\mathscr{C}(\mathcal{A})^0},\]
where
\[d(\mathscr{C}(\mathcal{A})^0)=\prod_{y\in\mathscr{C}(\mathcal{A})^0}(q_y-1)^{-2}.\]
We are interested in an inductive limit of these weak boundaries:
\[\partial T_0(\mathcal{A},\mathscr{C}(\mathcal{A}))=\bigcup_{S_0\subset S}T_0(\mathcal{A},\mathscr{C}(\mathcal{A})^0),\]
where the union runs over finite subsets $\mathscr{C}(\mathcal{A})^0\subset\mathscr{C}(\mathcal{A})$. If $g$ is an integrable function on $\mathbb{A}(\mathcal{A},\mathscr{C}(\mathcal{A}))\times\mathbb{A}(\mathcal{A},\mathscr{C}(\mathcal{A}))$, then one defines
\[\int_{\partial T_0(\mathcal{A},\mathscr{C}(\mathcal{A}))}g=\lim_{\mathscr{C}(\mathcal{A})^0\subset \mathscr{C}(\mathcal{A})}\int_{\partial T_0(\mathcal{A},\mathscr{C}(\mathcal{A})^0)}g.\]
The limit expressing the integral is only finite in exceptional circumstances. One such case is expressed by the so-called ``two-dimensional theta formula''.
\begin{theorem}\label{2DTheta.theorem}
Let $\mathcal{A}$ be an arithmetic surface, $S$ denote a set of curves on $\mathcal{A}$, and $f$ be an integrable function on $\mathbb{A}(\mathcal{A},S)\times\mathbb{A}(\mathcal{A},S)$, then
\[\int_{T_0(\mathcal{A})}(f(\alpha\beta)-\mathcal{F}(f_{\alpha})(\beta))d\mu(\beta)=\int_{\partial T_0(\mathcal{A})}(\mathcal{F}(f_{\alpha}(\beta))-f(\alpha\beta))d\mu(\beta).\]
\end{theorem}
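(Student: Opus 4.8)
The plan is to deduce the two-dimensional theta formula from the additive summation formula already in hand (the Proposition preceding the statement, from \cite[Proposition~32]{F3}), together with the definition of the weak-boundary integral in terms of the characteristic function $\text{char}_{\partial T_0}$. The underlying principle is the one familiar from Tate's thesis: the classical theta formula is Poisson summation rearranged, and here the ``rearrangement'' is forced by the fact that the naive integral $\int_{T_0(\mathcal{A})}(f(\alpha\beta)-\mathcal{F}(f_\alpha)(\beta))\,d\mu(\beta)$ does not converge, but its two ``halves'' each diverge in a controlled way whose difference is exactly captured by the boundary term. So the first step is to reduce to a finite subset $\mathscr{C}(\mathcal{A})^0 \subset S$: write both sides as limits over finite $\mathscr{C}(\mathcal{A})^0$, using the limit definitions $\int_{T_0(\mathcal{A},S)} = \lim \int_{T_0(\mathcal{A},\mathscr{C}(\mathcal{A})^0)}$ and $\int_{\partial T_0(\mathcal{A},S)} = \lim \int_{\partial T_0(\mathcal{A},\mathscr{C}(\mathcal{A})^0)}$, and prove the identity at each finite level, checking that the two limits are taken compatibly (same cofinal system of finite subsets).

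At the finite level, the second step is to apply the summation formula of the preceding Proposition on $\mathbb{B}(\mathcal{A},\mathscr{C}(\mathcal{A})^0)\times\mathbb{B}(\mathcal{A},\mathscr{C}(\mathcal{A})^0)$: for $\alpha \in \mathbb{A}(\mathcal{A},\mathscr{C}(\mathcal{A})^0)^{\times 2}$ we have
\[
\int f(\alpha\beta)\,d\mu_{\mathbb{B}\times\mathbb{B}}(\beta) = \frac{1}{|\alpha|}\int \mathcal{F}(f)(\alpha^{-1}\beta)\,d\mu_{\mathbb{B}\times\mathbb{B}}(\beta).
\]
The next step is to pass from $\mathbb{B}\times\mathbb{B}$ to $T_0 = \mathbb{B}^\times\times\mathbb{B}^\times$ and its weak boundary $\partial T_0$. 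The key combinatorial input is that $\mathbb{B}(\mathcal{A},y)$ decomposes, for each $y\in\mathscr{C}(\mathcal{A})^0$, into the ``zero'' locus (where the $k(y)$-component vanishes, contributing to $\partial T_0$), and $\mathbb{B}(\mathcal{A},y)^\times$, so that $\mathbb{B}\times\mathbb{B}$ is a disjoint union of $T_0(\mathcal{A},\mathscr{C}(\mathcal{A})^0)$ and pieces lying in $\partial T_0(\mathcal{A},\mathscr{C}(\mathcal{A})^0)$; this is exactly what makes $\partial T_0$ a measurable subset of $\mathbb{B}\times\mathbb{B}$ as asserted in the text. Splitting the integral of $f(\alpha\beta) - \mathcal{F}(f_\alpha)(\beta)$ over $\mathbb{B}\times\mathbb{B}$ according to this decomposition, the $T_0$-part is the left-hand side of the theorem and the remaining part is, after matching the normalizing constant $d(\mathscr{C}(\mathcal{A})^0) = \prod_{y}(q_y-1)^{-2}$ that appears in the definition of $\int_{\partial T_0}$, precisely the right-hand side (with the sign flip $\mathcal{F}(f_\alpha)(\beta) - f(\alpha\beta)$ coming from moving the boundary contribution to the other side of the summation formula). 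One must also track that the twist $f_\alpha$ and the factor $|\alpha|^{-1}$ from the summation formula are absorbed correctly into the multiplicative setup: $f_\alpha(\beta) := f(\alpha\beta)$ and the change of variable $\beta\mapsto\alpha^{-1}\beta$ on $\mathbb{B}$ is measure-preserving up to $|\alpha|^{-1}$, which is what converts the additive statement into one symmetric in $f$ and $\mathcal{F}(f)$.

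The main obstacle I expect is convergence bookkeeping at the limit over $\mathscr{C}(\mathcal{A})^0$: individually $\int_{T_0(\mathcal{A},\mathscr{C}(\mathcal{A})^0)}f(\alpha\beta)\,d\mu$ and $\int_{\partial T_0(\mathcal{A},\mathscr{C}(\mathcal{A})^0)}f(\alpha\beta)\,d\mu$ need not stabilize, and the assertion is really that their \emph{difference} does. The way to handle this is to never separate the two terms $f(\alpha\beta)$ and $\mathcal{F}(f_\alpha)(\beta)$: at each finite level the summation formula gives an \emph{exact} identity relating the $T_0$-integral of the difference to the $\partial T_0$-integral of the difference, with no error term, so the limit of the left side exists if and only if the limit of the right side does, and they agree. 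One then only needs that $f$ is integrable on $\mathbb{A}(\mathcal{A},S)\times\mathbb{A}(\mathcal{A},S)$ in the sense that these limits are finite — which is part of the hypothesis — and that, for all but finitely many $y$, the relevant local factor of $f$ is the characteristic function of $O_{x,y}\times O_{x,y}$ so that the local boundary contributions are trivial beyond a finite set, making the inductive limit well-posed. A secondary point to verify carefully is that the weak topology used to define $\partial T_0$ (the weakest making every $p$-lifted character continuous) assigns to $T_0(\mathcal{A},\mathscr{C}(\mathcal{A})^0)$ the expected boundary, namely the locus where some $k(y)^\times$-component degenerates to $0\in k(y)$; this is the two-dimensional analogue of $\partial k^\times = \{0\}$ recorded in Section~\ref{tate.section}, and it is what identifies $\partial T_0$ with a subset of $\mathbb{B}\times\mathbb{B}$ rather than something larger.
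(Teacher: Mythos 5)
The paper offers no proof of Theorem~\ref{2DTheta.theorem}: the statement is presented as the direct analogue of the two-dimensional theta formula Fesenko established for elliptic surfaces, and the surrounding text defers to \cite[\S 3.4--3.6]{F3} for the details. Your sketch is nonetheless the natural and, as far as I can tell, the intended proof: pass to a cofinal system of finite subsets $\mathscr{C}(\mathcal{A})^0\subset S$, apply the additive summation formula (the preceding Proposition) at each finite level, rewrite $\mathcal{F}(f_\alpha)(\beta)=|\alpha|^{-1}\mathcal{F}(f)(\alpha^{-1}\beta)$ so that the summation formula becomes $\int_{\mathbb{B}\times\mathbb{B}}\bigl(f(\alpha\beta)-\mathcal{F}(f_\alpha)(\beta)\bigr)\,d\mu(\beta)=0$, decompose $\mathbb{B}\times\mathbb{B}$ into $T_0$ and its complement, and observe that the common normalization $d(\mathscr{C}(\mathcal{A})^0)=\prod_y(q_y-1)^{-2}$ on both $\mu_{T_0}$ and the boundary integral cancels. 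Your handling of convergence — keeping the two terms coupled at every finite stage so that the limits of both sides exist or fail together — is also the right mechanism.

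The one genuine gap, which you flag but do not close, is the identification of the weak-topology boundary $\partial T_0(\mathcal{A},\mathscr{C}(\mathcal{A})^0)$ with (a full-measure subset of) $(\mathbb{B}\times\mathbb{B})\setminus T_0$. This is not a cosmetic point: the weak topology is pulled back along the projections $p_y$ to the one-dimensional adelic rings, hence is very coarse and computed inside $\mathbb{A}\times\mathbb{A}$, not inside $\mathbb{B}\times\mathbb{B}$; and in dimension two the complement $\mathbb{B}(y)\times\mathbb{B}(y)\setminus T_0(y)$ is the union of two coordinate ``axes'', not a single point as in the $\partial k^\times=\{0\}$ prototype from Section~\ref{tate.section}. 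One must actually compute closure and interior of $T_0$ under this topology and verify that the resulting $\partial T_0$, the set the paper declares measurable in $\mathbb{B}\times\mathbb{B}$, coincides up to null sets with the non-unit locus your decomposition requires. The paper does not carry out this verification either (it is implicit in the citation to Fesenko), so you are in good company, but your proof is not self-contained without it. A secondary point worth stating explicitly: on the overlap $T_0\subset\mathbb{B}\times\mathbb{B}$ the ``multiplicative'' lifted counting measure of Definition on $\mathbb{B}^\times$ and the ``additive'' one on $\mathbb{B}$ must agree — they do, because the counting measure on the discrete group $k(y)^\times\subset k(y)$ is simultaneously the additive and multiplicative Haar measure — and this is what lets you split the additive integral across $T_0\sqcup\partial T_0$ without a correction factor.
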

One applies this result to $\mathcal{A}=\mathcal{X}$ and $g=f\coprod h$. Consequently, one obtains the boundary integral contribution to $h(\mathcal{S},\{k(y_i)\},x)$. It transpires that this boundary integral knows much about the analytic properties of zeta.
\begin{corollary}\label{boundaryintegral.corollary}
Let $\{y_i\}$ be the set of horizontal curves on $\mathcal{S}$ in $\mathscr{C}(\mathcal{X})$. We may decompose the boundary integral as follows
\[\mathfrak{h}(\mathcal{S},\{y_i\},x)=\mathfrak{h}_1(\mathcal{S},\{y_i\},x)+\mathfrak{h}_2(\mathcal{S},\{y_i\},x),\]
where
\[\mathfrak{h}_1(x)=\int_{T_1(\mathcal{X},\mathscr{C}(\mathcal{X}))}(|\alpha^{-1}|-1)f(m_x^{-1}\alpha^{-1})d\mu(\alpha)\]
\[\mathfrak{h}_2(x)=x^2\int_{[T_1/T_0](\mathcal{X},\mathscr{C}(\mathcal{X}))}\int_{\partial T_0(\mathcal{X},\mathscr{C}(\mathcal{X}))}(|m_x\gamma|^{-1}f(m_x^{-1}\nu^{-1}\gamma^{-1}\beta)-f(m_x\gamma\beta))d\mu(\beta)d\mu(\gamma),\]
and $m_x$ are lifts of $x\in\mathbb{R}^{\times}_{+}$ to $T(\mathcal{X})$, and $\nu$ is as in \cite[Section~3.4]{F3}.
\end{corollary}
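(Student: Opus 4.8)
The plan is to follow the argument of Fesenko \cite[\S3.5--3.6]{F3} for elliptic surfaces, carrying the renormalising $\mathcal{P}$-factors of section~\ref{zetaintegral.section} along passively. Starting from the definition $\mathfrak{h}(\mathcal{S},\{y_i\},x)=\int_{T_1(\mathcal{X},\mathscr{C}(\mathcal{X}))}(x^2f(m_x\gamma)-f(m_x^{-1}\gamma))\,d\mu(\gamma)$, the first step is to insert the layered integration over the filtration $T_0(\mathcal{X},\mathscr{C}(\mathcal{X}))\subset T_1(\mathcal{X},\mathscr{C}(\mathcal{X}))$ from \cite[\S3.5]{F3}, writing the outer integral as $\int_{[T_1/T_0](\mathcal{X},\mathscr{C}(\mathcal{X}))}\int_{T_0(\mathcal{X},\mathscr{C}(\mathcal{X}))}$, with the inner integral over $T_0$ realised as the limit over finite subsets $\mathscr{C}(\mathcal{X})^0\subset\mathscr{C}(\mathcal{X})$ of integrals over the semi-global spaces $\mathbb{B}(\mathcal{X},\mathscr{C}(\mathcal{X})^0)$.

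Next I would apply the two-dimensional theta formula (Theorem~\ref{2DTheta.theorem}) with $\mathcal{A}=\mathcal{X}$ to the inner $T_0$-integral of the term $x^2f(m_x\gamma\beta)$, replacing $\int_{T_0}f(m_x\gamma\beta)\,d\mu(\beta)$ by $\int_{T_0}\mathcal{F}(f_{m_x\gamma})(\beta)\,d\mu(\beta)$ plus the weak-boundary term $\int_{\partial T_0}(\mathcal{F}(f_{m_x\gamma})(\beta)-f(m_x\gamma\beta))\,d\mu(\beta)$. To compute the Fourier contribution I would use the explicit shape of $f$ from Definition~\ref{cnf.definition} --- characteristic functions of the rank-two integer rings $O_{x,y}$, with the ``diamond'' variant at split ordinary double points, and the Gaussian-type archimedean components --- together with the self-duality statement of Lemma~\ref{duality.lemma}; this identifies $\mathcal{F}(f)$ with $f$ up to translation by the conductor element $\nu$ of \cite[\S3.4]{F3}, whose local components are governed by the integers $d(x,y)$ of Definition~\ref{dxz.definition}, giving $\mathcal{F}(f_{m_x\gamma})(\beta)=|m_x\gamma|^{-1}f(m_x^{-1}\nu^{-1}\gamma^{-1}\beta)$.

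The remainder is bookkeeping. The term $-f(m_x^{-1}\gamma)$ is left alone and, after the measure-preserving substitution $\gamma\mapsto\gamma^{-1}$, equals $-\int_{T_1(\mathcal{X},\mathscr{C}(\mathcal{X}))}f(m_x^{-1}\alpha^{-1})\,d\mu(\alpha)$. In the $x^2f(m_x\gamma)$ term, after inserting the value of $\mathcal{F}(f)$, reassembling $\int_{[T_1/T_0]}\int_{T_0}$ into $\int_{T_1}$ (using that $T_0$-elements have module $1$ by the product formula), normalising $|m_x|=x$, and substituting $\gamma\mapsto\gamma^{-1}$ and $\alpha=\gamma\beta$, the Fourier main term becomes $\int_{T_1(\mathcal{X},\mathscr{C}(\mathcal{X}))}|\alpha^{-1}|\,f(m_x^{-1}\alpha^{-1})\,d\mu(\alpha)$, the module twist $|\alpha^{-1}|$ absorbing the $\nu$-translation and the $|m_x\gamma|^{-1}$ factor. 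Adding the two contributions gives $\mathfrak{h}_1(x)$, and the collected weak-boundary terms, which still carry the $x^2$ prefactor, assemble into $\mathfrak{h}_2(x)$ as displayed. Finally, one checks that the limit over $\mathscr{C}(\mathcal{X})^0$ defining $\int_{\partial T_0}$ is finite: for the fibre components of $\mathcal{X}$ --- all fibres of $\mathcal{S}$ and of the $g-1$ copies of $\mathcal{P}$ --- the sets $\mathbb{B}(\cdot,y)$ stabilise to a single point, so only the finitely many horizontal curves $\{y_i\}$ contribute non-trivially, which is also why the $\mathcal{P}$-copies enter only through the factor $\xi(\mathcal{P},s)^{1-g}$; combining with Corollary~\ref{adelicboundary.theorem} then gives the stated formula for $x^{-1/2}h(\mathcal{S},\{k(y_i)\},x)$.

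The hard part will be the careful tracking of the $\mathbb{R}((X))$-valued module and the lifted measure through the theta formula and the two changes of variable: one must check that Fubini and the $T_0\subset T_1$ disintegration remain valid in the lifted setting, so that all the manipulations are legitimate, and one must pin down the element $\nu$ --- equivalently the total conductor exponent, a sum of the $d(x,y)$ over the relevant curves --- from Lemma~\ref{duality.lemma} and Definition~\ref{dxz.definition}, since an error there would shift both $\mathfrak{h}_1$ and $\mathfrak{h}_2$. By contrast the non-connectedness of $\mathcal{X}$ causes no real difficulty, since every construction involved is multiplicative over disjoint unions.
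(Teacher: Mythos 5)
Your proposal takes essentially the same route as the paper, which in fact gives no written proof of this corollary at all: the text immediately preceding it just says ``one applies this result [Theorem~\ref{2DTheta.theorem}] to $\mathcal{A}=\mathcal{X}$ and $g=f\coprod h$'' and refers the reader to Fesenko's treatment of the elliptic case. Your plan — disintegrate $T_1$ into $T_0\subset T_1$, apply the two-dimensional theta formula to the $x^2 f(m_x\gamma\beta)$ term, use the explicit Fourier self-duality $\mathcal{F}(f_{\alpha})(\beta)=|\alpha|^{-1}f(\nu^{-1}\alpha^{-1}\beta)$ to reassemble the Fourier main term with the untouched $-f(m_x^{-1}\gamma)$ term into $\mathfrak{h}_1$, and collect the weak-boundary terms with the $x^2$ prefactor into $\mathfrak{h}_2$ — is precisely the intended argument. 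You also correctly flag the two genuinely delicate points (the $\mathbb{R}((X))$-valued module, which must be used rather than its $\mathbb{R}^{\times}_{+}$ specialization or else $|\alpha^{-1}|-1$ would vanish identically on $T_1$, and pinning down $\nu$ from the conductor data of Lemma~\ref{duality.lemma} and Definition~\ref{dxz.definition}), which is more explicit than what the paper offers.
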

For elliptic curves, this result was first deduced by Fesenko. An analogous decomposition, not involving adelic integrals, appeared in \cite[Remark~5.11]{SRF}.
\subsection*{Acknowledgement}
The author was supported by the Heilbronn Institute for Mathematical Research. Numerous stimulating conversations on the subject of this paper were had with I. Fesenko, to whom the author is grateful.
\bibliography{studygroup}
\bibliographystyle{plain}
\end{document}